\documentclass[11pt,a4paper]{article}

\usepackage{amsmath,amssymb,amscd,amsthm,amsfonts}
\usepackage{amsmath,amsfonts,amssymb,amsthm,epsfig,epstopdf,titling,url,array, amscd}
\usepackage{dsfont}
\usepackage{tikz}
\usepackage[margin=3cm]{geometry}
\usepackage{floatrow}
\usepackage{hyperref}
\usepackage[nobysame, alphabetic]{amsrefs}

\theoremstyle{plain}
\newtheorem{theorem}{Theorem}[section]

\newtheorem{proposition}{Proposition}

\theoremstyle{definition}
\newtheorem{definition}{Definition}
\newtheorem{conjecture}{Conjecture}
\newtheorem{example}{Example}

\newcommand{\rr}{\mathds{R}}
\newcommand{\conv}{\operatorname{conv}}
\newcommand{\tv}{\operatorname{Tv}}
\newcommand{\diam}{\operatorname{diam}}

\author{Deborah Oliveros\thanks{Oliveros and Torres's research is funded by UNAM PAPIIT grant AGI00721.}, \'Erika Rold\'an, Pablo Sober\'on\thanks{Sober\'on's research is funded by grants NSF DMS 2054419 and NSF DMS 2237324}, Antonio J.\ Torres.}

\title{Tverberg Partition Graphs}

\begin{document}

\maketitle

\begin{abstract}
Given a finite set of points in $\rr^d$, Tverberg's theorem guarantees the existence of partitions of this set into parts whose convex hulls intersect.  We introduce a graph structured on the family of Tverberg partitions of a given set of points, whose edges describe closeness between different Tverberg partitions.  We prove bounds on the minimum and maximum degree of this graph, the number of vertices of maximal degree, its clique number, and its connectedness.
\end{abstract}

\textbf{MSC2020:} 52A37, 52A35

\textbf{Keywords:} Tverberg's theorem, Combinatorial Geometry, Partition Graph.

\section{Introduction}

The study of intersection properties of convex sets is a central topic in discrete geometry.  One of the most important results in this area is Tverberg's theorem, proved in 1966.  Given a sufficiently large set of points in $\rr^d$, Tverberg's theorem guarantees the existence of partitions into $r$ subsets whose convex hulls intersect.  Such partitions are called Tverberg partitions.

\begin{theorem}[Tverberg's Theorem, 1966 \cite{tverberg1966generalization}]
Let $r$ and $d$ be positive integers, and $S\subset \rr^d$ be a set with at least ${\tv(d,r)=(d + 1)(r - 1) + 1}$ points. Then there exists a partition of $S$ into $r$ parts whose convex hulls intersect.
\end{theorem}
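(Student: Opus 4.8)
The plan is to deduce Tverberg's theorem from the Colorful Carath\'eodory theorem of B\'ar\'any via Sarkaria's tensor trick. First I would reduce to the tight case $|S| = N := (d+1)(r-1)+1$: if $S$ has more points, take any $N$-point subset, find a Tverberg partition of it, and distribute the leftover points arbitrarily among the parts, since enlarging a part only enlarges its convex hull and so the common intersection point survives. Thus I may assume $S = \{x_1, \dots, x_N\} \subset \rr^d$.

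Next I would set up the lift. Fix $r$ vectors $v_1, \dots, v_r \in \rr^{r-1}$ that are the vertices of a regular simplex centered at the origin, so that $\sum_{i=1}^r v_i = 0$ and any $r-1$ of them are linearly independent. For each point write its homogenization $\bar{x}_j = (x_j, 1) \in \rr^{d+1}$, and in the tensor space $\rr^{d+1} \otimes \rr^{r-1} \cong \rr^{(d+1)(r-1)}$ form the $N$ color classes
\[
C_j = \{\, \bar{x}_j \otimes v_i : i = 1, \dots, r \,\}, \qquad j = 1, \dots, N.
\]
Because $\tfrac{1}{r}\sum_{i=1}^r \bar{x}_j \otimes v_i = \bar{x}_j \otimes \tfrac{1}{r}\sum_i v_i = 0$, the origin lies in $\conv C_j$ for every $j$.

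Now I would apply the Colorful Carath\'eodory theorem in dimension $D = (d+1)(r-1)$. There we have exactly $D+1 = N$ color classes, each containing the origin in its convex hull, so the theorem yields a rainbow choice: a color $\sigma(j) \in \{1,\dots,r\}$ for each $j$ and weights $\lambda_j \ge 0$ with $\sum_j \lambda_j = 1$ such that $\sum_j \lambda_j\, \bar{x}_j \otimes v_{\sigma(j)} = 0$. Setting $I_i = \{\, j : \sigma(j) = i \,\}$ and $w_i = \sum_{j \in I_i} \lambda_j \bar{x}_j \in \rr^{d+1}$, this reads $\sum_{i=1}^r w_i \otimes v_i = 0$. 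Using $v_r = -\sum_{i<r} v_i$ and the linear independence of $v_1, \dots, v_{r-1}$, the relation forces $w_1 = \dots = w_r$. Reading off the last coordinate gives $\sum_{j \in I_i}\lambda_j = 1/r$ for every $i$, so in particular each part is nonempty, and reading off the first $d$ coordinates shows that the convex combinations $\sum_{j \in I_i}(r\lambda_j)\, x_j$ all equal one common point, which therefore lies in $\conv\{\, x_j : j \in I_i \,\}$ for every $i$. Hence $\{I_1, \dots, I_r\}$ is the desired Tverberg partition.

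The main obstacle is the tensor bookkeeping in the final step: showing that $\{\, e_k \otimes v_i \,\}$ is a basis of the tensor space so that $\sum_i w_i \otimes v_i = 0$ genuinely forces all $w_i$ equal, and then correctly interpreting the homogenizing last coordinate to guarantee that the parts are nonempty and that the $r\lambda_j$ are honest convex weights. The conceptual crux, and the reason the threshold is exactly $(d+1)(r-1)+1$, is the precise matching of the tensor dimension $(d+1)(r-1)$ with the number of color classes demanded by Colorful Carath\'eodory.
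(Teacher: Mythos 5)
Your proof is correct, but note that the paper does not actually prove this statement: Tverberg's theorem is quoted as background with a citation to Tverberg's 1966 paper, whose original argument is a direct geometric/combinatorial one quite different from yours. What you give is the standard Sarkaria--B\'ar\'any--Onn reduction to the Colorful Carath\'eodory theorem, and the delicate points all check out: the origin lies in each class $C_j$ because $\sum_{i=1}^r v_i = 0$; Colorful Carath\'eodory applies in dimension $D=(d+1)(r-1)$ precisely because the number of classes is $D+1=(d+1)(r-1)+1$; the identity $\sum_{i=1}^r w_i\otimes v_i=0$ does force $w_1=\cdots=w_r$, since substituting $v_r=-\sum_{i<r}v_i$ gives $\sum_{i<r}(w_i-w_r)\otimes v_i=0$ and the vectors $e_k\otimes v_i$ with $i<r$ form a basis of the tensor space; and the homogenizing coordinate gives $\sum_{j\in I_i}\lambda_j = 1/r>0$, which simultaneously yields nonemptiness of every part and that the rescaled weights $r\lambda_j$ are honest convex coefficients. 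Interestingly, this is exactly the machinery the paper itself deploys in its proof of Theorem \ref{thm:prob} (the sets $X_a=\{(a,1)\otimes v_j\}$ there are your classes $C_j$, and a choice of one point per class containing the origin in its convex hull is equated there with a Tverberg partition), so your route is ``different from the paper'' only in the sense that the paper never proves the theorem at all; it is entirely consonant with the paper's toolkit. One small remark: the version of Colorful Carath\'eodory you invoke (with $D+1$ color classes) follows from the strong form the paper states as Theorem \ref{caratheodory}, applied in $\rr^{D}$, by letting the distinguished point $q_{D+1}$ be an arbitrary element of the last class.
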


In addition to its multiple applications in discrete geometry, Tverberg's theorem showcases the connections of this area with topology and combinatorics.  Research around Tverberg's theorem remains active and it has inspired multiple generalizations \cite{Barany:2018fy, de2019discrete}. Nevertheless, there are relatively few structural results regarding the set of all Tverberg partitions of a given set.  

Given a set $S$ of $n$ points in $\rr^d$, consider the set of all its Tverberg partitions into $r$ parts.  Tverberg's theorem and some of its generalizations guarantee that this set is not empty for $n$ sufficiently large, but not much more is known about it.  For example, even finding optimal lower bounds for the number 
of Tverberg partitions when $|S|=(d+1)(r-1)+1$ is a longstanding open problem that is known as Sierksma's conjecture \cite{Sierksma1979}. This conjecture claims that if $|S|=(d+1)(r-1)+1$, there always exist at least $(r-1)!^d$ Tverberg partitions of $S$.  The best known lower bound on the number of Tverberg partitions, under the assumption of $r$ being a prime number, is $\frac{1}{(r-1)!}(r/2)^{(r-1)(d+1)/2}$  \cite{Vucic:1993be}, which is roughly the square root of Sierkma's conjecture.

In this manuscript, we study structural parameters of the family of Tverberg partitions of a given set $S \subset \rr^d$ on different configurations of points, including when $|S| > \tv(d,r)$. To accomplish this, we introduce a graph that has as vertices the set of all Tverberg partitions of a given set $S$, and we measure edge connectivity between Tverberg partitions by observing how ``\emph{different}'' they are. Our aim is to take the first steps to understanding the structural properties of this graph, such as connectivity, diameter, and degree distribution. In particular, we will analyze graphs associated with Radon partitions, that is, when $r=2$ and $d\geq 2$, in more detail. 

The precise construction of this graph is as follows. 

\begin{definition}\label{def-TverbergGraph}
Given a set of points $S$ in $\rr^d$ and a positive integer $r$, we define the \emph{Tverberg $r$-partition graph of $S$}, denoted by $G_{T}[S,r]$, as the graph with one vertex for each Tverberg partition of $S$ into $r$ parts.  Two of these partitions $P$ and $P'$ are connected by an edge if and only if there exists exactly one element $x$ in $S$ such that by removing $x$ from both parts, $P$ and $P'$, the resulting partitions of $S\setminus \{x\}$ are the same. When $r=2$, we call this graph the \emph{Radon partition graph} of $S$.
\end{definition}

This definition implies that it is possible to reassign this special point $x$ from its subset in the partition $P$ to another subset of $P$ to obtain the partition $P^{\prime}$.   

Even though most known results around Tverberg's theorem deal with the case when $|S|=(d+1)(r-1)+1$, we will see that the structure of Tverberg graphs is richer when $|S|>(d+1)(r-1)+1$.

By ignoring all geometric information about the set of points $S$, it becomes natural to introduce an equivalent graph for the family of all non-empty partitions of a set $S$ into $r$ parts, which we denote by $G[S,r]$. Although some interesting properties of these graphs have been studied before (see for instance \cite{regnier1983quelques, almudevar1999estimation, konovalov2005partition}),  to our best of our knowledge, these graphs have not been defined previously.  Clearly, once the geometric information of $S$ is considered, only some of the vertices of $G[S,r]$ will correspond to Tverberg partitions of $S$. In other words, $G_T[S,r]$ is a subgraph of $G[S,r]$. We will show that these two graphs become increasingly similar in the sense that as $|S|$ becomes larger, almost every vertex of $G[S,r]$ is a vertex of $G_T[S,r]$, and almost all vertices in $G_T[S,r]$ have the same neighborhood as in $G[S,r]$. This reinforces recent findings related to Tverberg's theorem with tolerance, which shows that for large $S$, almost all partitions of $S$ into $r$ parts are Tverberg partitions \cite{Soberon:2018gn}.

\subsection{Main Results}

In our first main result, we give bounds for the number of elements in $S$ that guarantee the connectedness of the Tverberg $r$-partition graph of $S$. 

\begin{theorem}\label{generalconnected}
 Let $r$ and $d$ be positive integers and $S \subset \rr^d$ a set of at least $3(\tv(d,r))-1$ points. 
 Then $G_T[S,r]$ is connected.
 \end{theorem}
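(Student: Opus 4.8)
The plan is to exhibit a single connected \emph{hub} of Tverberg partitions and to show that every vertex of $G_T[S,r]$ can be joined to it. Throughout, write $t=\tv(d,r)$. The basic operation generating edges is a \emph{sliding move}: if $P=(P_1,\dots,P_r)$ is a Tverberg partition with common point $p\in\bigcap_i\conv(P_i)$ and $x\in P_i$ is \emph{redundant}, meaning $p\in\conv(P_i\setminus\{x\})$ and $P_i\setminus\{x\}\neq\emptyset$, then for every $j\neq i$ the partition obtained by moving $x$ from $P_i$ to $P_j$ is again Tverberg (the point $p$ survives in $P_i\setminus\{x\}$ and in the enlarged $P_j$) and is adjacent to $P$ in $G_T[S,r]$. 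By Carath\'eodory's theorem each $\conv(P_i)$ already contains $p$ using at most $d+1$ points of $P_i$, so every part with more than $d+1$ points carries a redundant point; since $|S|\ge 3t-1$ exceeds $r(d+1)$, redundant points are abundant in every Tverberg partition. I will use sliding moves to freely redistribute redundant points.

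First I fix the hub. By Tverberg's theorem choose a $t$-point subset $T_0\subseteq S$ together with a Tverberg partition $\Pi=(\Pi_1,\dots,\Pi_r)$ of $T_0$ with common point $\rho$, and let $H$ be the set of all Tverberg partitions $P$ of $S$ with $\Pi_i\subseteq P_i$ for all $i$ (for this fixed labelling). Every $P\in H$ has $\rho$ as a common point, every point of $S\setminus T_0$ is redundant in its part of $P$, and each part contains the nonempty set $\Pi_i$. Consequently sliding moves let us relocate any point of $S\setminus T_0$ to any part without leaving $H$ and without emptying a part; since any redistribution of the $|S|-t\ge 2t-1$ points of $S\setminus T_0$ among the $r$ parts is reachable by such single moves, the induced subgraph on $H$ is connected. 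It therefore suffices to connect an arbitrary Tverberg partition $P$ to some element of $H$.

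The heart of the argument is this reduction, and it is also the main obstacle. Given $P$ with common point $p$, I would relocate the points of $T_0$ into their prescribed parts $\Pi_i$ one at a time, keeping the already placed points fixed and the partition Tverberg at every step. Moving a core point $y$ \emph{into} its target part is always a legal sliding-type move, so the only difficulty is removing $y$ from its current part $A$ when $y$ is \emph{essential} there, i.e.\ when $\conv(A\setminus\{y\})$ no longer meets the common region $\bigcap_{i\neq A}\conv(P_i)$ of the remaining parts. The maneuver to overcome this is to first import, via sliding moves, one or more redundant points of $S$ into $A$ so as to create a new common point lying in $\conv(A\setminus\{y\})$, after which $y$ becomes redundant and can be slid out. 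Making this maneuver always available is precisely what forces $|S|$ to be large: heuristically one needs one group of about $t$ points to realize the fixed core $\Pi$ of the hub, a second group of about $t$ points to serve as the mobile scaffolding that re-establishes a witness in each source part while an essential point is extracted, and a further $t-1$ points of genuine slack to guarantee that no part is ever emptied during the process. Verifying that the surplus guaranteed by $|S|\ge 3t-1$ always supplies the required scaffolding points, and bounding how many are consumed per relocation, is the technical crux; once it is in place, $P$ reaches $H$ and, since $H$ is connected, $G_T[S,r]$ is connected.
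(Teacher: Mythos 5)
There is a genuine gap, and you have flagged it yourself: the entire difficulty of the theorem sits in your unproved reduction of an arbitrary Tverberg partition $P$ to the hub $H$. Your hub construction and its internal connectivity are fine (each part contains $\Pi_i$, so every point of $S\setminus T_0$ is redundant and can be relocated freely). But the ``scaffolding maneuver'' --- importing redundant points into a part $A$ so that an essential point $y\in T_0$ becomes redundant --- is asserted, not proved. There is no argument that the available redundant points can ever be positioned so that $\conv\bigl((A\setminus\{y\})\cup Z\bigr)$ meets $\bigcap_{i\neq 1}\conv(P_i)$; this depends on the geometry of $S$, and nothing in your setup rules out that every importable point lies on the wrong side. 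Moreover, your only tool for producing redundancy is Carath\'eodory, which yields a witness core of size up to $r(d+1)=\tv(d,r)+d$ per partition; if you run the natural counting with cores of that size (to find a fresh core disjoint from both the current core and $T_0$), you need roughly $|S|\ge 3\tv(d,r)+d-1$ points, so the stated bound $3\tv(d,r)-1$ is out of reach by this route.

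The missing ingredient is exactly what the paper uses: P\'or's generalization of Kirchberger's theorem, which says that any Tverberg $r$-partition of $S$ contains a Tverberg sub-partition supported on at most $\tv(d,r)$ points. With this, every point outside that small core is redundant \emph{simultaneously}, and no extraction maneuver is ever needed: given partitions $P$ and $Q$ with cores $S_P$ and $S_Q$, if the cores are disjoint one moves the points of $S\setminus S_P$ one by one to their $Q$-positions (the untouched core $S_P$ certifies every intermediate partition), and then moves the points of $S_P$; if the cores intersect, then $|S_P\cup S_Q|\le 2\tv(d,r)-1$, so $|S|\ge 3\tv(d,r)-1$ leaves at least $\tv(d,r)$ untouched points in which to build a third, disjoint core, and one routes through it. Your hub architecture would also work if you replaced Carath\'eodory by P\'or's theorem (if the core of $P$ avoids $T_0$, slide the $T_0$ points home; otherwise first re-core $P$ on $\tv(d,r)$ points of $S\setminus(S_P\cup T_0)$, which the cardinality bound provides), but as written the proposal does not prove the reduction, and without a Kirchberger-type theorem it cannot reach the claimed bound.
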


 It is not always the case that $3\tv(d,r)-1$ points are needed to guarantee connectivity of $G_T[S,r]$. For instance, we show in Theorem \ref{radon is alway connected} that a Radon partition graph is always connected.
 
 On the other hand, having cardinality $|S|= \tv(d,r)$ ensures that the graph $G_T[S,r]$ has a non-empty set of vertices, but it could have no edges.  A classic construction that exemplifies Tverberg's theorem is to take $(d+1)(r-1)+1$ points in $\rr^d$ in the following way.

 \begin{example}\label{eg:1}
 Consider $d+1$ clusters of $r-1$ points each, centered around the vertices of a fixed simplex $\Delta$. Then take an additional point $p$ in the barycenter of $\Delta$. Perturb each point slightly so that the family is in general position.  Observe that the only Tverberg partitions into $r$ parts have one part in the set $\{p\}$, while the remaining  $r-1$ parts each have one point of each cluster.  It is clear that it is not possible to go from one of these partitions to another by changing a single point, so the associated Tverberg partition graph has no edges.  For more examples of sets of $(r-1)(d+1)+1$ points with where the parts of each Tverberg partition have fixed sizes, see \cite{White2017}.
\end{example}


In any configuration of points where a set $S$ has more than $\tv(d,r)$ points, we have the following bounds for maximum and minimum degree in the Tverberg $r$-partition graph of $S$.

 \begin{theorem}\label{degree}
Let $S$ be a set of $n> \tv(d,r)$ points in $\rr^d$. Then for every vertex $v$ in $V(G_T[S,r])$, we have
\[
\Big(n+1 - \tv(d,r) \Big)(r-1)\leq \operatorname{deg}(v) \leq n(r-1).
\]  
\end{theorem}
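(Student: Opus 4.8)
The plan is to prove the two inequalities separately: the upper bound is a direct counting argument, while the lower bound requires a linear-algebraic analysis of the set of common intersection points. Both rest on the same local description of edges, which I would record first. By Definition \ref{def-TverbergGraph}, a neighbor $P'$ of a partition $P=\{A_1,\dots,A_r\}$ is obtained by relocating a single point $x$ from its part $A_i$ to some other part $A_j$, and conversely any relocation that yields a Tverberg partition produces a neighbor: removing $x$ from $P$ and from $P'$ gives the same partition of $S\setminus\{x\}$, and $x$ is the \emph{unique} witness, since every other point lies in the same part of $P$ and $P'$, so deleting it cannot equalize the two partitions while $x$ stays misplaced. Distinct pairs $(x,j)$ yield distinct neighbors, and there are $n$ choices for $x$ and at most $r-1$ admissible target parts, so $\deg(v)\le n(r-1)$ immediately.

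For the lower bound, call a point $x\in A_i$ \emph{removable} if deleting $x$ from $A_i$ while leaving the other parts intact is still a Tverberg partition of $S\setminus\{x\}$. If $x$ is removable with common point $w$, then moving $x$ into any other part keeps $w$ in the convex hull of every part, so each removable point contributes $r-1$ distinct neighbors; hence $\deg(v)\ge (r-1)\cdot\#\{\text{removable points}\}$, and it suffices to show that at most $\tv(d,r)-1$ points are non-removable. The key tool is the polytope of fractional common points. Encoding a common point of $P$ by barycentric coordinates $\lambda^i_p\ge 0$ ($p\in A_i$), let $\Lambda\subseteq\rr^n$ be the set of $\lambda$ with $\sum_{p\in A_i}\lambda^i_p=1$ for each $i$ and with $\sum_{p\in A_i}\lambda^i_p\,p$ independent of $i$ (all part-barycenters coincide). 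This is a nonempty polytope cut out by $(r-1)d+r=\tv(d,r)$ linear equations, so $\dim\Lambda\ge n-\tv(d,r)\ge 1$.

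The crucial translation is that $x\in A_i$ is removable if and only if some $\lambda\in\Lambda$ has $\lambda^i_x=0$ (the reduced part stays nonempty automatically, as its hull contains the resulting common point); equivalently, the non-removable points are exactly the coordinates strictly positive on all of $\Lambda$. Such coordinates lie in the support of every vertex (basic feasible solution) of $\Lambda$, and each such support has size at most the rank of the constraint system, namely $\tv(d,r)$. The naive conclusion is then that there are at most $\tv(d,r)$ non-removable points, which is one short of what is needed.

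To extract the final unit I would use that $\Lambda$ is genuinely higher-dimensional. Since $\dim\Lambda\ge 1$, the polytope $\Lambda$ has at least two distinct vertices, and distinct vertices of a polytope in the nonnegative orthant have distinct supports (the constraint columns indexed by a vertex support are independent, so the support determines the vertex). Two distinct sets of size at most $\tv(d,r)$ intersect in at most $\tv(d,r)-1$ elements, and the non-removable points lie in the intersection of these two supports, so there are at most $\tv(d,r)-1$ of them. This yields at least $n-\tv(d,r)+1$ removable points and hence $\deg(v)\ge (n+1-\tv(d,r))(r-1)$. The main obstacle is precisely this last squeeze: the straightforward support bound only gives $\tv(d,r)$ non-removable points, and obtaining the extra $+1$ forces one to exploit that $\Lambda$ has dimension at least one and therefore carries several vertices with distinct supports.
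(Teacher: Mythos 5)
Your upper bound, your notion of removable points, and your ``crucial translation'' (a point $x$ in part $A_i$ is removable if and only if some $\lambda\in\Lambda$ has $\lambda^i_x=0$, each removable point contributing $r-1$ neighbors) are all correct, and reducing the theorem to the claim that at most $\tv(d,r)-1$ points are non-removable is in fact the same skeleton as the paper's proof: there, P\'or's theorem produces a subset $S'$ with $|S'|\le \tv(d,r)$ outside of which every point is removable, and the real work is showing that at least one point \emph{inside} $S'$ is removable too. The genuine gap in your argument is the assertion $\dim\Lambda\ge n-\tv(d,r)\ge 1$. Counting equations bounds from below the dimension of the affine subspace they cut out, but $\Lambda$ is that subspace intersected with the nonnegative orthant, and this intersection can have strictly smaller dimension. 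Concretely, in $\rr^2$ with $r=2$ and $\tv(2,2)=4$, take $c=(0,0)$, $d=(2,0)$, $e=(1,-1)$, $a=(1,0)$ (in the relative interior of the edge $cd$), $b=(1,1)$, and the Radon partition $A_1=\{a,b\}$, $A_2=\{c,d,e\}$. The two convex hulls meet only at the point $a$, and the representing coefficients of $a$ are unique in each part, so $\Lambda=\bigl\{(1,0;\tfrac12,\tfrac12,0)\bigr\}$ is a single point even though $n=5>\tv(2,2)$. Then $\Lambda$ has exactly one vertex, your final squeeze (two distinct vertices, hence two distinct supports of size at most $\tv(d,r)$ whose intersection has size at most $\tv(d,r)-1$) cannot start, and the naive support bound leaves you one unit short --- exactly the unit the theorem needs. (In this example the unique support happens to have size $3=\tv(2,2)-1$, so the \emph{conclusion} survives, but your argument does not establish it.)

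This failure is not incidental: the missing unit is the genuinely geometric content of the statement, and it is precisely where the paper has to work hardest. After the P\'or reduction, the paper picks an auxiliary point $y\notin S'$ and proves that some $x\in S'$ can be deleted, splitting into two cases: when the relevant part $P'_1$ has $d+1$ points it applies the strong colorful Carath\'eodory theorem (Theorem \ref{caratheodory}) with $d$ copies of $P'_1$ and the singleton $\{y\}$; when $|P'_1|\le d$ it runs a dimension argument in which the common intersection of $\conv(P'_1\cup\{y\})$ with the other hulls is a segment, and the endpoint opposite to $p$ certifies a deletable point. Both cases inject geometric information (the existence of the extra point $y$ and a Carath\'eodory-type exchange) that pure linear-algebraic support counting on $\Lambda$ cannot supply, since, as the example shows, $\Lambda$ can degenerate to a single rigid basic solution whenever the common point is unique and uniquely represented. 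Any repair of your approach would have to import such an exchange step, at which point it essentially becomes the paper's proof.
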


Note that Example \ref{eg:1} also shows that the condition $n>\tv(d,r)$ is crucial, as the minimum degree may be zero if $n=\tv(d,r)$.  We conjecture that the lower bound given in Theorem \ref{degree} is always optimal when the set of points are in strong general position.  

\begin{conjecture}\label{minimum degree}
For any set of $n > \tv(d,r)$ points in $\rr^d$ in strong general position, the minimum degree of $G_T[S,r]$, denoted by $\delta(G_T[S,r])$, is exactly $(n+1 - \tv(d,r) )(r-1)$.  
\end{conjecture}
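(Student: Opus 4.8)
Since Theorem \ref{degree} already gives $\delta(G_T[S,r])\ge (n+1-\tv(d,r))(r-1)$, the conjecture reduces to producing, in any set $S$ in strong general position, a single Tverberg partition $P$ whose degree equals this value. The plan is to first turn the degree into a purely geometric count and then to exhibit a partition realizing the extreme case of that count. For a Tverberg partition $P=(P_1,\dots,P_r)$ write $K=\bigcap_{k}\conv(P_k)$ and $K_i=\bigcap_{k\ne i}\conv(P_k)$. Relocating $x\in P_i$ to a part $P_j$ is a Tverberg partition as soon as $\conv(P_i\setminus\{x\})\cap\conv(P_j\cup\{x\})\cap\bigcap_{k\neq i,j}\conv(P_k)\neq\emptyset$. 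The key observation is that $\conv(P_i\setminus\{x\})\cap K_i\ne\emptyset$ is a sufficient condition that does not mention the target $j$: when it holds, all $r-1$ relocations of $x$ succeed. Call such an $x$ \emph{freely movable} and every other point \emph{tight}. Then each freely movable point contributes exactly $r-1$ to the degree, and the bound of Theorem \ref{degree} is precisely the statement that at least $n+1-\tv(d,r)$ points are freely movable, i.e.\ that at most $\tv(d,r)-1$ are tight. To prove the conjecture I would construct a partition with exactly $\tv(d,r)-1$ tight points and, crucially, with no relocations beyond the $(n+1-\tv(d,r))(r-1)$ forced by the freely movable ones.

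The target configuration is one in which $K$ is as small as a full-dimensional intersection can be, namely a $d$-simplex. Since $S$ is in strong general position, whenever $K\ne\emptyset$ it is full dimensional, so $K$ has at least $d+1$ facets; each facet of $K$ lies on a facet of some $\conv(P_i)$, and I would verify that the vertex of a simplex part $P_i$ opposite such a facet is freely movable, whereas a vertex whose opposite face misses $K$ is tight. When $K$ is a simplex this yields exactly $d+1$ freely movable ``vertex'' points. Moreover every point lying in the interior of the convex hull of its own part is automatically freely movable, since deleting it does not change that hull; placing the remaining $n-(\tv(d,r)+d)$ points as such redundant interior points adds one freely movable point apiece. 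Counting gives $(d+1)+\big(n-(\tv(d,r)+d)\big)=n+1-\tv(d,r)$ freely movable points and the complementary $\tv(d,r)-1$ tight points, the desired extreme. The part sizes must be balanced carefully in the range $\tv(d,r)<n<\tv(d,r)+d$, where not all parts can be simplices; here I expect a case analysis on sizes $s_1,\dots,s_r$ with $\sum s_i=n$ and $K$ simplicial to close the gap.

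Two points are the real obstacles. First, the conjecture asserts the bound for \emph{every} $S$ in strong general position, so one cannot simply place the points: one must show that every such $S$ admits a Tverberg partition whose intersection region $K$ is a simplex with the matching size profile. I would attempt this either by an extremal argument—taking, among all Tverberg partitions of $S$, one minimizing the number of facets of $K$, and arguing that in strong general position this minimum is $d+1$—or by induction on $n$, removing a point, invoking a minimum-degree partition of the smaller set, and reinserting the point as a redundant interior point. Second, and more delicate, one must rule out the relocations that the sufficient condition does not detect: a tight point $x\in P_i$ could in principle still move to some $P_j$ through a witness in $\conv(P_i\setminus\{x\})\cap\conv(P_j\cup\{x\})$ lying outside $K_i$, i.e.\ a point that genuinely uses $x$ on the $P_j$ side. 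Proving that no such extra move exists at any of the $\tv(d,r)-1$ tight points is exactly where strong general position must enter, and I expect this non-existence statement—global rather than local, since it concerns the whole configuration and not merely a neighborhood of a single Tverberg point—to be the main difficulty in converting the construction into a proof.
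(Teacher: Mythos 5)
This statement is a \emph{conjecture} in the paper, not a theorem: the authors prove only the lower bound (Theorem \ref{degree}) plus two planar special cases --- strong general convex position (Proposition \ref{convex cases}) and Radon partitions in the plane (Proposition \ref{degreeradonplane}) --- by explicit constructions using cyclic orders on a convex curve and rotating lines. Your proposal does not close this gap either; it is a program, not a proof. Your reformulation is sound as far as it goes: the dichotomy between ``freely movable'' points ($x\in P_i$ with $\conv(P_i\setminus\{x\})\cap K_i\neq\emptyset$, each contributing exactly $r-1$ edges) and ``tight'' points matches the paper's notion of essential points, and you correctly observe that a tight point may still admit a relocation witnessed outside $K_i$, which is exactly why exactness is delicate. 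But the two ``real obstacles'' you name at the end --- (a) that \emph{every} $S$ in strong general position admits a Tverberg partition with exactly $\tv(d,r)-1$ tight points, and (b) that those tight points admit \emph{no} relocation whatsoever --- are precisely the entire content of the conjecture beyond Theorem \ref{degree}, and both are deferred to ``I would attempt'' and ``I expect.'' Nothing beyond the already-known lower bound is actually established.

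There is also a concrete false step in the construction the whole plan is organized around. You assert that strong general position forces $K=\bigcap_k\conv(P_k)$ to be full-dimensional whenever it is nonempty. This is false regardless of genericity: $K\subseteq\conv(P_i)$ for every $i$, so any part with at most $d$ points forces $\dim K\le d-1$. Full-dimensionality of $K$ requires every part to have at least $d+1$ points, i.e.\ $n\ge r(d+1)=\tv(d,r)+d$. You do flag the range $\tv(d,r)<n<\tv(d,r)+d$ as needing a separate case analysis, but for $d\ge 2$ this range contains the smallest --- and empirically the hardest --- instances of the conjecture: the regular octagon of Figure \ref{Tv graph} ($n=8$, $d=2$, $r=3$, so $7<8<9$) has minimum degree $5$ rather than $4$, and it fails the conjectured bound exactly because its concurrent main diagonals violate strong general position. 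So in this range no partition with simplicial $K$ exists at all, the intended mechanism is unavailable, and whatever replaces it must use strong general position in an essential, global way; this is where a proof would have to do genuinely new work, and the proposal does not indicate what that work would be.
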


In the plane, we show (Proposition \ref{convex cases}) that this conjecture holds when the points in $S$ are in strong general convex position, and that when $r=2$, the strong general position condition can be dropped (Proposition \ref{minimum radon}), but we have an example that shows that the strong general position condition is essential (see Figure 3).  


\medskip 

Finally, we prove that $G_T[S,r]$ asymptotically converges to $G[S,r]$ in the sense that the probability of a partition in $G[S,r]$ being also a Tverberg partition, and thus an element in $G_T[S,r]$, converges to one as the cardinality of $S$ increases. 

\begin{theorem} \label{thm:prob}
    Let $r$ and $d$ be fixed positive integers, and $S$ a set of $n$ points in $\rr^d$.  We can form a partition $P$ of $S$ into at most $r$ parts by assigning to each point of $S$ an integer from $1$ to $r$ uniformly and independently. Then the probability that $P$ is a Tverberg partition of $r$ parts with degree $n(r-1)$ in $G_T[S,r]$ is at least
    \[
1- \exp\left(-O(n)\right).
    \]
\end{theorem}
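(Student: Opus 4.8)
The plan is to reduce the statement to a robust (tolerance-one) containment condition for a single well-chosen point, and then to establish that condition by a union bound of Chernoff-type estimates. Call a point $z\in\rr^d$ \emph{$1$-robust} for a set $A\subseteq S$ if $z\in\conv(A\setminus\{x\})$ for every $x\in A$. The first step is a sufficiency lemma: if some point $z$ is $1$-robust for each of the parts $A_1,\dots,A_r$ of a partition $P$, then $P$ is a Tverberg partition into $r$ parts and $\deg_{G_T[S,r]}(P)=n(r-1)$. Indeed, $z\in\conv(A_i\setminus\{x\})\subseteq\conv(A_i)$ forces every part to be nonempty and to contain $z$, so $P$ is a Tverberg $r$-partition; and for any $x\in A_i$ and any target part $j\ne i$, reassigning $x$ to $A_j$ replaces $A_i,A_j$ by $A_i\setminus\{x\}$ and $A_j\cup\{x\}$, both of which still contain $z$ in their convex hull (the first by $1$-robustness, the second since $\conv(A_j)\subseteq\conv(A_j\cup\{x\})$). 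Hence each of the $n(r-1)$ single-point reassignments yields a neighbouring Tverberg $r$-partition; these are pairwise distinct and distinct from $P$, so together with the upper bound of Theorem \ref{degree} we conclude $\deg(P)=n(r-1)$.

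The second step is to choose $z$ and convert $1$-robustness into a counting condition. I would take $z$ to be a \emph{centerpoint} of $S$, so that every closed halfspace containing $z$ contains at least $n/(d+1)$ points of $S$. A short separation argument shows that $z$ is $1$-robust for a part $A$ as soon as, for every direction $u$, at least two points $a\in A$ satisfy $\langle u,a\rangle\le\langle u,z\rangle$: if some direction had at most one such point, deleting it would strictly separate $z$ from $A$. Writing $N_u=\{s\in S:\langle u,s\rangle\le\langle u,z\rangle\}$ for the corresponding near-side, the centerpoint property gives $|N_u|\ge n/(d+1)$ for every $u$. Thus it suffices to show that, with high probability, $|A_i\cap N_u|\ge 2$ for every part $A_i$ and every direction $u$.

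The third step is the probabilistic estimate. Each point of $S$ lands in $A_i$ independently with probability $1/r$, so for a fixed near-side $N$ and a fixed part the variable $|A_i\cap N|$ is binomial with parameters $|N|\ge n/(d+1)$ and $1/r$, whence $\Pr[|A_i\cap N|\le 1]\le e^{-\Omega(n)}$ for fixed $r$ and $d$. The near-sides $N_u$ form a subfamily of the sets cut from $S$ by halfspaces; since halfspaces in $\rr^d$ have VC dimension $d+1$, Sauer--Shelah bounds the number of distinct such sets by $O(n^{d+1})$. A union bound over these polynomially many near-sides and the $r$ parts keeps the total failure probability at $\mathrm{poly}(n)\cdot e^{-\Omega(n)}=e^{-\Omega(n)}=\exp(-O(n))$, which is exactly the claimed bound.

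The main obstacle lies in the first two steps, not the last. Isolating the correct robust condition, and checking that it simultaneously forces the Tverberg property, excludes singleton parts, and legalizes all $n(r-1)$ reassignments, is the conceptual core; the role of the centerpoint is then precisely to replace an uncountable family of separating directions by a single uniform lower bound $n/(d+1)$ that is stable across the only polynomially many combinatorial near-sides. Once this uniformity is secured, the Chernoff-plus-union-bound computation is routine, and the implicit constant in $\exp(-O(n))$ depends only on $r$ and $d$.
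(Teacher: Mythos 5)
Your proof is correct, but it follows a genuinely different route from the paper. The paper's proof applies Sarkaria's tensor trick (in the B\'ar\'any--Onn form): the points are lifted to $\rr^{(r-1)(d+1)}$, a random partition becomes a random choice of one point from each lifted set, being a Tverberg partition with tolerance one becomes the condition that every halfspace through the origin captures at least two chosen points, and the proof finishes with Hoeffding's inequality plus a union bound over the at most $(nr)^{(r-1)(d+1)}$ combinatorially distinct halfspaces, counted via a dual hyperplane arrangement. You instead stay in $\rr^d$: you fix a centerpoint $z$ of $S$ as an explicit witness, reduce maximal degree to $1$-robustness of $z$ for every part, convert that to the condition that every ``near-side'' halfspace at $z$ (which contains at least $n/(d+1)$ points of $S$ by the centerpoint theorem) meets every part in at least two points, and close with Chernoff plus a Sauer--Shelah union bound over the $O(n^{d+1})$ distinct near-sides. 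Your approach is more elementary (no Sarkaria lifting), has a smaller polynomial factor in the union bound (degree $d+1$ instead of $(r-1)(d+1)$), and actually proves something slightly stronger: with high probability the Tverberg point of the random partition can be taken to be a centerpoint of $S$ fixed in advance, independent of the partition. The paper's route buys explicit constants and stays within the colorful-Carath\'eodory machinery used elsewhere in the paper. Two small wording issues in your write-up, neither fatal: (i) your justification of the separation step is stated in the converse direction --- what you need is that if $z\notin\conv(A\setminus\{x\})$ then the strictly separating hyperplane yields a direction whose near-side contains at most one point of $A$; the implication you wrote (few near points implies separation) is also true but is not the one used; (ii) as defined, $1$-robustness holds vacuously for an empty part, so it does not by itself force nonemptiness --- but this is harmless, since the event you actually verify, $|A_i\cap N_u|\ge 2$ for all $i$ and $u$, forces every part to have at least two points.
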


As we mentioned before, this result reinforces a recent finding related to Tverberg's theorem with tolerance, which is a generalization of Tverberg's theorem that introduces a new parameter $t$ called \emph{tolerance}.  Tverberg's theorem with tolerance states that there is an integer $N = N(d, t,r)$ such that any point set $S \subset \rr^d$ with at least $N$ points can be partitioned into $r$ disjoint sets $S_1,\dots, S_r$ such that $\bigcap_{j=1}^r \conv(S_j \setminus Q ) \neq \emptyset$ for any set $Q\subset S$ with at most $t$ points.  We observe that any partition with $t\geq 0$ tolerance is a Tverberg partition.  If a partition has tolerance for a $t\geq 1$, it is possible to relocate any element of $S$ to another subset of the partition and still have a Tverberg partition (since even removing the point still gives us a Tverberg partition). Thus, such a partition is a vertex in $G_T[S,r]$ of maximal degree. The current best asymptotic bound for $N(d,t,r)$ is $N(d,t,r) = rt + \tilde{O}(r^2\sqrt{td}+ r^3d)$, where $\tilde{O}$ hides polylogarithmic factors in $r,t,d$ \cite{Soberon:2018gn}.

\medskip
The rest of the paper is structured as follows. In Section \ref{sec:partitiongraphs} we describe the properties of $G[S,r]$, the general partition graphs on a set of $r$ elements.  These graphs are useful as a reference point when we ignore all geometric conditions on the set.  In Section \ref{sec:tverberggraphs} we prove our results for Tverberg partition graphs.  In Section \ref{radon and bounds} we prove stronger results for Radon partition graphs.  Finally, we present conclusions and future directions of research in Section \ref{sec:remarks}.

\section{Partition graphs}\label{sec:partitiongraphs}
Given a finite set $S$, a partition of $S$ is a collection of pairwise disjoint subsets (parts) whose union is $S$. Given two partitions $P$ and $P^\prime$ of a set $S$, we define the \emph{partition-distance} from $P$ to $P^\prime$ as the minimum number of elements we need to change from one part to another in $P$ to get $P^\prime$, and denote it as $D(P, P^\prime)$. The partition distance is also equal to the minimum number of elements of $S$ that we need to remove so that the partitions $P$ and $P^\prime$ restricted to the remaining points are equal. This definition  was first stated in 1965 by S.\ R\'{e}gnier \cite{regnier1983quelques}.
The distance between partitions has been widely studied in recent years, generating interesting results in areas such as bioinformatics \cite{almudevar1999estimation, konovalov2005partition} and data mining \cite{gusfield2002partition}.\\
In this paper we will work with \emph{$r$-partitions} of $S$, that is, partitions of $S$ into exactly $r$ nonempty parts. 

\begin{definition}
Given a set $S$ with $n$ elements and a positive integer $r$, the \emph{$r$-partition graph} of $S$, which we denote by $G[S,r]$, has as vertex set all $r$-partitions of $S$, and an edge between two of its partitions $P_1,P_2$ if and only if their partition distance $D(P_1, P_2)=1$. We will denote by $V(G[S,r])$ the set of vertices and by $E(G[S,r])$ the set of edges of $G[S,r]$.
\end{definition}

When $r=2$ and $n$ is even, note that all partitions may be divided into partitions with an odd number of elements or partitions with an even number of elements; moreover the distance between such partitions is one. Therefore for this case, this graphs are bipartite (see example in Figure \ref{prisms}).    

\begin{figure}[h]
\centering
\tikzset{every picture/.style={line width=0.75pt}} 

\begin{tikzpicture}[x=0.75pt,y=0.75pt,yscale=-1,xscale=1]

\draw  [color={rgb, 255:red, 0; green, 0; blue, 0 }  ,draw opacity=1 ][fill={rgb, 255:red, 0; green, 0; blue, 0 }  ,fill opacity=1 ][line width=1.5]  (201.95,59.68) .. controls (201.94,58.33) and (203.01,57.24) .. (204.33,57.23) .. controls (205.66,57.23) and (206.73,58.32) .. (206.74,59.67) .. controls (206.74,61.02) and (205.67,62.12) .. (204.35,62.12) .. controls (203.03,62.13) and (201.95,61.04) .. (201.95,59.68) -- cycle ;
\draw  [color={rgb, 255:red, 0; green, 0; blue, 0 }  ,draw opacity=1 ][fill={rgb, 255:red, 0; green, 0; blue, 0 }  ,fill opacity=1 ][line width=1.5]  (202.02,117.48) .. controls (202.02,116.13) and (203.09,115.04) .. (204.41,115.03) .. controls (205.73,115.03) and (206.81,116.12) .. (206.81,117.47) .. controls (206.82,118.82) and (205.75,119.92) .. (204.42,119.92) .. controls (203.1,119.93) and (202.03,118.84) .. (202.02,117.48) -- cycle ;
\draw  [color={rgb, 255:red, 0; green, 0; blue, 0 }  ,draw opacity=1 ][fill={rgb, 255:red, 0; green, 0; blue, 0 }  ,fill opacity=1 ][line width=1.5]  (281.4,70.34) .. controls (281.39,68.99) and (282.46,67.89) .. (283.78,67.88) .. controls (285.11,67.88) and (286.18,68.97) .. (286.19,70.32) .. controls (286.19,71.67) and (285.12,72.77) .. (283.8,72.77) .. controls (282.48,72.78) and (281.4,71.69) .. (281.4,70.34) -- cycle ;
\draw  [color={rgb, 255:red, 0; green, 0; blue, 0 }  ,draw opacity=1 ][fill={rgb, 255:red, 0; green, 0; blue, 0 }  ,fill opacity=1 ][line width=1.5]  (202.12,88.31) .. controls (202.11,86.96) and (203.18,85.86) .. (204.5,85.86) .. controls (205.83,85.85) and (206.9,86.94) .. (206.91,88.3) .. controls (206.91,89.65) and (205.84,90.74) .. (204.52,90.75) .. controls (203.19,90.75) and (202.12,89.66) .. (202.12,88.31) -- cycle ;
\draw  [color={rgb, 255:red, 0; green, 0; blue, 0 }  ,draw opacity=1 ][fill={rgb, 255:red, 0; green, 0; blue, 0 }  ,fill opacity=1 ][line width=1.5]  (282.22,100.54) .. controls (282.21,99.19) and (283.28,98.09) .. (284.6,98.09) .. controls (285.93,98.09) and (287,99.18) .. (287.01,100.53) .. controls (287.01,101.88) and (285.94,102.98) .. (284.62,102.98) .. controls (283.29,102.98) and (282.22,101.89) .. (282.22,100.54) -- cycle ;
\draw  [color={rgb, 255:red, 0; green, 0; blue, 0 }  ,draw opacity=1 ][fill={rgb, 255:red, 0; green, 0; blue, 0 }  ,fill opacity=1 ][line width=1.5]  (201.98,29.4) .. controls (201.97,28.05) and (203.04,26.95) .. (204.37,26.94) .. controls (205.69,26.94) and (206.76,28.03) .. (206.77,29.38) .. controls (206.77,30.73) and (205.7,31.83) .. (204.38,31.83) .. controls (203.06,31.84) and (201.98,30.75) .. (201.98,29.4) -- cycle ;
\draw  [color={rgb, 255:red, 0; green, 0; blue, 0 }  ,draw opacity=1 ][fill={rgb, 255:red, 0; green, 0; blue, 0 }  ,fill opacity=1 ][line width=1.5]  (281.58,40.33) .. controls (281.57,38.98) and (282.64,37.88) .. (283.97,37.88) .. controls (285.29,37.87) and (286.36,38.97) .. (286.37,40.32) .. controls (286.37,41.67) and (285.3,42.76) .. (283.98,42.77) .. controls (282.66,42.77) and (281.58,41.68) .. (281.58,40.33) -- cycle ;
\draw    (204.37,29.39) -- (283.97,40.32) ;
\draw    (204.37,29.39) -- (283.79,70.33) ;
\draw    (204.37,29.39) -- (284.61,100.54) ;
\draw    (204.34,59.68) -- (283.97,40.32) ;
\draw    (204.34,59.68) -- (283.79,70.33) ;
\draw    (204.34,59.68) -- (284.61,100.54) ;
\draw    (204.51,88.3) -- (284.61,100.54) ;
\draw    (204.51,88.3) -- (283.79,70.33) ;
\draw    (204.42,117.48) -- (284.61,100.54) ;
\draw    (204.51,88.3) -- (283.97,40.32) ;
\draw    (204.42,117.48) -- (283.79,70.33) ;
\draw    (204.42,117.48) -- (283.93,39.74) ;

\draw (146.1,23.97) node [anchor=north west][inner sep=0.75pt]   [align=left] {{\tiny \{1\},\{2,3,4\}}};
\draw (146.1,53.3) node [anchor=north west][inner sep=0.75pt]   [align=left] {{\tiny \{2\},\{1,3,4\}}};
\draw (146.43,82.97) node [anchor=north west][inner sep=0.75pt]   [align=left] {{\tiny \{3\},\{1,2,4\}}};
\draw (146.77,112.3) node [anchor=north west][inner sep=0.75pt]   [align=left] {{\tiny \{4\},\{1,2,3\}}};
\draw (292.43,35.17) node [anchor=north west][inner sep=0.75pt]   [align=left] {{\tiny \{1,2\},\{3,4\}}};
\draw (292.77,64.83) node [anchor=north west][inner sep=0.75pt]   [align=left] {{\tiny \{1,3\},\{2,4\}}};
\draw (292.77,94.83) node [anchor=north west][inner sep=0.75pt]   [align=left] {{\tiny \{1,4\},\{2,3\}}};

\end{tikzpicture}
\caption{The $2$-partition graph of the set $\{1,2,3,4\}$.}\label{prisms}
\end{figure}
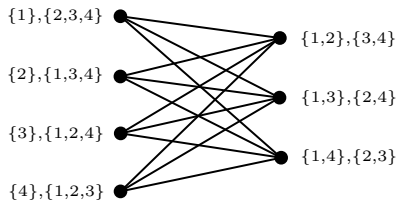

Given positive numbers $n$ and $r$, let $S(n,r)$ denote the number of Stirling numbers of the second kind (see \cite[p.~76]{riordan2012introduction}). By using simple counting arguments, the following proposition analyses the basic properties of $G[S,r]$.  

\begin{proposition}
Given a set $S$ with $n$ elements and a positive integer $r$,  $G[S, r]$ is a connected graph with $|V(G[S, r])=S(n,r)$, and for every vertex $v\in G[S, r]$, its minimum degree is $(n-r+1)(r-1)$ and the maximum degree is $n(r-1)$. Furthermore  if $r\leq n/2$,  Then \[\vert E(G[S,r])\vert= \frac{1}{2}\sum\limits_{k=0}^{r-1} {n\choose k}S_2(n-k,r-k)(n-k)(r-1),\] where $S_2(x,y)$ denotes the $2$-associated Stirling number of the second kind.
\end{proposition}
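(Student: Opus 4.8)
The vertex count is immediate from the definition, since the vertices are exactly the partitions of $S$ into $r$ nonempty blocks and $S(n,r)$ counts these. For the degrees, fix a partition $P=\{A_1,\dots,A_r\}$ and write $s(P)$ for its number of singleton blocks. A neighbour of $P$ arises by moving one element $x$ from its block $A_i$ to another block $A_j$, and the result is again an $r$-partition exactly when $|A_i|\ge 2$ (moving the unique element of a singleton block would leave only $r-1$ nonempty blocks). I would check that distinct pairs (moved element, target block) yield distinct partitions, since from the resulting partition one recovers both which element moved --- the unique element whose block changed --- and where it went. Hence each of the $n-s(P)$ elements lying in a block of size at least $2$ may be routed to any of the other $r-1$ blocks, giving $\operatorname{deg}(P)=(n-s(P))(r-1)$. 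Since $0\le s(P)\le r-1$ when $n>r$, this already yields the bounds $(n-r+1)(r-1)\le\operatorname{deg}(P)\le n(r-1)$; the lower bound is attained by a partition with $r-1$ singletons and one block of size $n-r+1$, and the upper bound by a singleton-free partition, which exists precisely when $r\le n/2$.

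\textbf{Connectivity.} For connectivity I would show every $r$-partition reaches, via single-element moves that never empty a block, the fixed canonical partition $C=\{\{x_1\},\dots,\{x_{r-1}\},\,S\setminus\{x_1,\dots,x_{r-1}\}\}$, where $x_1,\dots,x_{r-1}$ are fixed distinct elements. Process the targets in order, freezing each singleton once it is created. When processing $x_i$ with $i\le r-1$, there are $r-(i-1)\ge 2$ unfrozen blocks, so if the block $A$ containing $x_i$ has size at least $2$ we repeatedly move its other elements into a second unfrozen block; every such move is legal because $A$ has size at least $2$ at the moment an element leaves, and no block is emptied. After these moves $\{x_i\}$ is a singleton, which we freeze. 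Once $x_1,\dots,x_{r-1}$ are frozen singletons, the unique remaining block is $S\setminus\{x_1,\dots,x_{r-1}\}$, so $C$ is reached, and connectedness follows.

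\textbf{Edges.} For the edge count I would combine the handshake identity $|E(G[S,r])|=\tfrac12\sum_P\operatorname{deg}(P)$ with the degree formula, organising the sum by the number of singleton blocks. The decisive observation is that the number of $r$-partitions with exactly $k$ singletons equals $\binom{n}{k}S_2(n-k,r-k)$: one selects the $k$ singleton elements in $\binom{n}{k}$ ways and partitions the remaining $n-k$ elements into $r-k$ blocks each of size at least $2$, which is by definition the $2$-associated Stirling number $S_2(n-k,r-k)$. Every such partition has degree $(n-k)(r-1)$, so summation over $k=0,\dots,r-1$ gives
\[
|E(G[S,r])|=\frac12\sum_{k=0}^{r-1}\binom{n}{k}S_2(n-k,r-k)(n-k)(r-1).
\]
The hypothesis $r\le n/2$ ensures $n\ge 2r$, so that the $k=0$ term is nonzero and the count is consistent with the maximum degree $n(r-1)$.

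\textbf{Main obstacle.} The genuinely delicate steps are the connectivity construction, where one must order the moves so as never to empty a block --- the reason for freezing completed singletons and always dumping into a second unfrozen block --- and the identification of partitions with exactly $k$ singletons with $\binom{n}{k}S_2(n-k,r-k)$, which is where the $2$-associated Stirling numbers enter. The remaining steps are routine counting.
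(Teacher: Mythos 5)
Your proof is correct and, in its counting parts, essentially identical to the paper's: vertices counted by $S(n,r)$, the degree formula $(n-s(P))(r-1)$ obtained by observing that exactly the elements in blocks of size at least $2$ can be moved to any of the other $r-1$ blocks, and the edge count via the handshake identity with the degree classes parametrized by the number $k$ of singletons, each class of size $\binom{n}{k}S_2(n-k,r-k)$. The only real divergence is connectivity: the paper dispenses with it in a single clause, asserting that the degree discussion ``implies'' the graph is connected, whereas you give an actual argument --- routing every partition to the canonical partition $\{\{x_1\},\dots,\{x_{r-1}\},\,S\setminus\{x_1,\dots,x_{r-1}\}\}$ by single-element moves, freezing each completed singleton so that no intermediate move ever empties a block. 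Your version of that step is sound (at stage $i\le r-1$ at least two unfrozen blocks remain, so the surplus elements of the block containing $x_i$ always have a legal destination), and it fills in the one point the paper's own write-up leaves unproved; otherwise the two proofs follow the same route.
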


\begin{proof}

Since the Stirling numbers of the second kind count the number of different ways of dividing a set with $n$ elements into $r$ non-empty subsets (parts), the graph $G[S, r]$ will have clearly exactly $S(n,r)$ vertices. Given a partition $P= P_1,\dots ,P_r $ of $S$, we note that if an element lies in a part with more than one element, we can move it to any of the remaining parts. Furthermore the degree of $P$ in $G$ is equal to $\sum_{\vert P_i \vert \neq 1} \vert P_i \vert(r-1)$, which implies that the minimum degree of $G[S,r]$ is $(n-r+1)(r-1)$ and the maximum degree is $n(r-1)$, and the $r$-partition graph is connected.

Let $P$ be a partition of $S$ with exactly $k$ parts of cardinality one. Then $\deg(P)=(n-k)(r-1)$ because the $(n-k)$ elements in the parts with cardinality greater than one can be moved to any of the remaining $r-1$ parts. There are exactly ${n \choose k} S_2(n-k,r-k)$ possible partitions with this degree, where the $2$-associated Stirling numbers of the second kind $S_2(x,y)$ count the number of ways to partition a set of $x$ objects into $y$ subsets, with each subset containing at least $2$ elements. Then the sum of the degrees of all the vertices is \[\sum\limits_{k=0}^{r-1}{n \choose k} S_2(n-k,r-k)(n-k)(r-1).\]
Since $E=\frac{1}{2}\sum\limits_{P \in V} deg(P)$, we have the desired result. 
\end{proof}

Table \ref{nincrease} and \ref{rnincrease}, shows how rapidly the number of vertices and the number of edges increases as $r$ and $n$ increase, respectively. 
 
\begin{table}[h]
\centering
\begin{tabular}{|ccccccccc|}
\hline
\multicolumn{9}{|c|}{\begin{tabular}[c]{@{}c@{}}Number of vertices for the \\ $r$-partition graph of a set with $n$ elements\end{tabular}}                                                                                                                                    \\ \hline
\multicolumn{1}{|c|}{\textbf{r}\textbackslash{}\textbf{n}}        & \multicolumn{1}{c|}{\textbf{5}} & \multicolumn{1}{c|}{\textbf{6}} & \multicolumn{1}{c|}{\textbf{7}} & \multicolumn{1}{c|}{\textbf{8}} & \multicolumn{1}{c|}{\textbf{9}} & \multicolumn{1}{c|}{\textbf{10}} & \multicolumn{1}{c|}{\textbf{11}} & \textbf{12} \\ \hline
\multicolumn{1}{|c|}{\textbf{2}} & \multicolumn{1}{c|}{15}         & \multicolumn{1}{c|}{31}         & \multicolumn{1}{c|}{63}         & \multicolumn{1}{c|}{127}        & \multicolumn{1}{c|}{255}        & \multicolumn{1}{c|}{511}         & \multicolumn{1}{c|}{1023}        & 2047        \\ \hline
\multicolumn{1}{|c|}{\textbf{3}} & \multicolumn{1}{c|}{25}         & \multicolumn{1}{c|}{90}         & \multicolumn{1}{c|}{301}        & \multicolumn{1}{c|}{966}        & \multicolumn{1}{c|}{3025}       & \multicolumn{1}{c|}{9330}        & \multicolumn{1}{c|}{28501}       & 86526       \\ \hline
\multicolumn{1}{|c|}{\textbf{4}} & \multicolumn{1}{c|}{10}         & \multicolumn{1}{c|}{65}         & \multicolumn{1}{c|}{350}        & \multicolumn{1}{c|}{1701}       & \multicolumn{1}{c|}{7770}       & \multicolumn{1}{c|}{34105}       & \multicolumn{1}{c|}{145750}      & 611501      \\ \hline
\multicolumn{1}{|c|}{\textbf{5}} & \multicolumn{1}{c|}{1}          & \multicolumn{1}{c|}{15}         & \multicolumn{1}{c|}{140}        & \multicolumn{1}{c|}{1050}       & \multicolumn{1}{c|}{6951}       & \multicolumn{1}{c|}{42525}       & \multicolumn{1}{c|}{246730}      & 1379400     \\ \hline
\end{tabular}
\caption{Number of vertices in partition graphs with respect to $r$ and $n$. } 
\label{nincrease}
\end{table}

\begin{table}[h]
\centering
\begin{tabular}{|ccccccccc|}
\hline
\multicolumn{9}{|c|}{\begin{tabular}[c]{@{}c@{}}Number of edges for the \\ $r$-partition graph of a set with $n$ elements\end{tabular}}                              
\\ \hline
\multicolumn{1}{|c|}{\boldmath{$r\backslash n$}} & \multicolumn{1}{c|}{\textbf{5}} & \multicolumn{1}{c|}{\textbf{6}} & \multicolumn{1}{c|}{\textbf{7}} & \multicolumn{1}{c|}{\textbf{8}} & \multicolumn{1}{c|}{\textbf{9}} & \multicolumn{1}{c|}{\textbf{10}} & \multicolumn{1}{c|}{\textbf{11}} & \textbf{12} \\ \hline
\multicolumn{1}{|c|}{\textbf{2}}                  & \multicolumn{1}{c|}{35}         & \multicolumn{1}{c|}{90}         & \multicolumn{1}{c|}{217}        & \multicolumn{1}{c|}{504}        & \multicolumn{1}{c|}{1143}       & \multicolumn{1}{c|}{2550}        & \multicolumn{1}{c|}{5621}        & 12276       \\ \hline
\multicolumn{1}{|c|}{\textbf{3}}                  & \multicolumn{1}{c|}{90}         & \multicolumn{1}{c|}{450}        & \multicolumn{1}{c|}{1890}       & \multicolumn{1}{c|}{7224}       & \multicolumn{1}{c|}{26082}      & \multicolumn{1}{c|}{90750}       & \multicolumn{1}{c|}{307890}      & 1026036     \\ \hline
\multicolumn{1}{|c|}{\textbf{4}}                  & \multicolumn{1}{c|}{30}         & \multicolumn{1}{c|}{360}        & \multicolumn{1}{c|}{2730}       & \multicolumn{1}{c|}{16800}      & \multicolumn{1}{c|}{91854}      & \multicolumn{1}{c|}{466200}      & \multicolumn{1}{c|}{2250930}     & 10494000    \\ \hline
\multicolumn{1}{|c|}{\textbf{5}}                  & \multicolumn{1}{c|}{0}          & \multicolumn{1}{c|}{60}         & \multicolumn{1}{c|}{1050}       & \multicolumn{1}{c|}{11200}      & \multicolumn{1}{c|}{94500}      & \multicolumn{1}{c|}{695100}      & \multicolumn{1}{c|}{4677750}     & 29607600    \\ \hline
\end{tabular}
\caption{Number of edges in partition graphs with respect to $r$ and $n$. } \label{rnincrease}
\end{table}

Given two $r$-partitions $P$ and $P^\prime$, the distance $D(P,P^\prime)$ is exactly the minimum length over all paths between $P$ and $P^\prime$ in $G[S,r]$. Then the \emph{diameter} of the graph $G[S,r]$, denoted by $\diam(G[S,r])$, can be interpreted as the maximum of the distances over all possible $r$-partitions of $S$. In \cite{charon2006maximum}, it was proven that
\[
\diam(G[S,r])= \left\{
\begin{array}{c l}
 2n-2r &  \text{ if } n\leq 2r-2\\
 n-\lceil \frac{n}{r} \rceil & \text{ if }  n\geq 2r-1,
\end{array}
\right.
\]
where $n$ is the number of elements in $S$.

The clique number of a graph $G$, denoted by $\omega(G)$, is the number of vertices in its largest clique. The following result gives the clique number of the partition graph of a set on any number of parts.

\begin{proposition}\label{cliquepartitiongraph}
Let $S$ be a set with $n$ elements, and $G[S,r]$, $r\geq 1$,  be the $r$-partition graph for an integer $n>r$. Then $\omega(G[S,r])=r$. 
\end{proposition}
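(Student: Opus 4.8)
The plan is to prove the two inequalities $\omega(G[S,r]) \ge r$ and $\omega(G[S,r]) \le r$ separately, with the bulk of the work in the upper bound.

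For the lower bound I would exhibit an explicit $K_r$. Fix any $x \in S$. Since $n > r$ we have $|S \setminus \{x\}| = n-1 \ge r$, so there is a partition $\pi$ of $S \setminus \{x\}$ into $r$ nonempty parts $b_1,\dots,b_r$. For each $i$ let $P_i$ be the $r$-partition of $S$ obtained by inserting $x$ into $b_i$; each $P_i$ is a genuine $r$-partition, and for $i \neq j$ removing $x$ from both $P_i$ and $P_j$ returns $\pi$, so $D(P_i,P_j)=1$ and $P_i \sim P_j$. Hence $\{P_1,\dots,P_r\}$ is a clique, giving $\omega(G[S,r]) \ge r$.

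For the upper bound I would encode each $r$-partition as a surjection $S \to \{1,\dots,r\}$ (a coloring using all $r$ colors), defined up to relabeling of the colors. With this encoding two partitions are adjacent precisely when they admit representative colorings that differ on exactly one element of $S$ (their \emph{Hamming distance} is $1$); the fact that the source part stays nonempty is automatic, since both colorings are surjective. Let $\mathcal{C}$ be a clique, fix a base vertex $P_1 \in \mathcal{C}$ with representative coloring $f_1$, and for each other $P_k \in \mathcal{C}$ choose a representative $f_k$ differing from $f_1$ on exactly one element $s_k \in S$. The target is to show that all the $s_k$ coincide: if they do, every vertex of $\mathcal{C}$ (including $P_1$) restricts to the same partition of $S \setminus \{s\}$ and differs only in the part assigned to $s$, so $\mathcal{C}$ sits inside a ``star'' with at most $r$ members, one for each part that $s$ may join, proving $\omega(G[S,r]) \le r$.

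The crux, and the main obstacle, is verifying that two neighbors $P_j,P_k$ with $s_j \neq s_k$ cannot be adjacent. With the chosen representatives, $f_j$ and $f_k$ differ on exactly the two elements $s_j,s_k$, so their Hamming distance is $2$; adjacency of $P_j$ and $P_k$ would require some relabeling $\sigma$ of the colors making $f_j$ and $\sigma \circ f_k$ differ on a single element. Here is precisely where the hypothesis $n>r$ and the sizes of the parts of $P_1$ enter: any nontrivial $\sigma$ moves at least two colors, and on the many elements where $f_j = f_k = f_1$ it creates a disagreement for every element whose $f_1$-color is moved by $\sigma$. I would quantify this to show that once the parts of $P_1$ are large enough, a nontrivial $\sigma$ forces Hamming distance at least $2$, so only $\sigma = \mathrm{id}$ (hence $s_j = s_k$) survives, collapsing $\mathcal{C}$ onto a star. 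I expect the delicate small-part configurations—exactly the regime where ``rotation-type'' cliques that are not stars could appear—to be the hardest part of the analysis and the real content behind the clean value $\omega(G[S,r]) = r$.
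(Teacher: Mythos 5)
Your lower bound is complete, correct, and essentially the same construction the paper uses (let a single element $x$ visit each part of a fixed $r$-partition of $S\setminus\{x\}$). The genuine problem is the upper bound: the step you yourself call the crux is not only left unproven, it is false as stated. The ``rotation-type'' cliques you hope to exclude really exist. Take $r=3$, $S=\{a,b,c,z\}$ and
\[
Q_1=\{\{a,b\},\{c\},\{z\}\},\qquad Q_2=\{\{a,c\},\{b\},\{z\}\},\qquad Q_3=\{\{b,c\},\{a\},\{z\}\};
\]
then $Q_1\sim Q_2$ (delete $a$), $Q_1\sim Q_3$ (delete $b$), and $Q_2\sim Q_3$ (delete $c$): a triangle whose three edges use three distinct elements. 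In your coloring language, with $f_1$ giving color $1$ to $a,b$, color $2$ to $c$, color $3$ to $z$, the representatives of $Q_2,Q_3$ differ from $f_1$ at $s_2=a\neq b=s_3$, yet the transposition $\sigma$ of colors $1$ and $2$ makes $f_2$ and $\sigma\circ f_3$ differ only at $c$. So nontrivial relabelings do survive, and this persists for every $n>r$ when $r\geq 3$ (distribute the remaining $n-3$ points among the other $r-2$ parts), so no amount of ``large parts'' analysis can force $\sigma=\mathrm{id}$. Hence ``every clique is a star'' is unprovable, and your proposed collapse of $\mathcal{C}$ onto a star cannot be carried out. (Incidentally, at $r=2$, $n=3$ this same triangle shows the proposition as literally stated fails: $\omega(G[S,2])=3$ there.)

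What is true, and what a correct argument must show, is that non-star cliques are rigid and small: an analysis of exactly the type you set up (counting the disagreements a nontrivial $\sigma$ creates on the color classes of $f_1$, using surjectivity of all representatives) shows that a non-star triangle must have precisely the form above---two parts of $Q_1$ are $\{e_1,e_2\}$ and $\{e_3\}$, and $Q_2,Q_3$ are the two other pairings of $e_1,e_2,e_3$ with everything else fixed---and then that no fourth partition is adjacent to all three: any such $Q_4$ would either have to differ from $Q_1$ at two distinct single elements (impossible, the deleted element of an edge is unique) or coincide with one of $Q_2,Q_3$. Thus every clique on at least four vertices is a star, of size at most $r$, while non-star cliques have size $3\leq r$ once $r\geq 3$; together these give $\omega(G[S,r])=r$. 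You should also know that the paper's own proof commits exactly the error you were circling: it asserts that if $P_1\sim P_2$ by deleting $x$ and $P_2\sim P_3$ by deleting $y\neq x$ then $D(P_1,P_3)>1$, which the triangle above refutes, because it silently ignores the relabeling of parts. Your instinct that the relabeling issue is the real content of the upper bound was right; the fix is to classify the rotation cliques, not to rule them out.
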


\begin{proof}
Consider a partition $P$ of $S$.  Since $n> r$, there is at least one part with more than one element.  We can fix one of the elements $x$ in a part with more than one element.  If we move $x$ to any of the other parts, the resulting $r-1$ partitions, together with $P$, induce a complete subgraph of $G[S,r]$ with exactly $r$ vertices. Then $\omega(G[S,r])\geq r$. 

On the other hand, if $Q$ is a clique, we will prove that all the vertices of $Q$ differ in exactly one specific element.
If $Q$ is an edge, by definition we are done. So, we can suppose that $Q$ has $k\geq 3$ vertices, say $P_1,\dots ,P_k$. Since $P_1$ is adjacent to $P_2$, there exists an element $x$ in $S$ such that if we delete it we obtain that $P_1$ is equal to $P_2$ (restricted to $S\setminus \{x\}$). Analogously, there exists an element $y$ in $S$ such that $P_2=P_3$, restricted to $S\setminus \{y\}$. If these two elements were different, then $D(P_1,P_2)>1$, which is a contradiction. Therefore, $x=y$.  Since all the vertices of $Q$ differ only by the element $x$, we have $\omega(G[S,r])\leq r$.
\end{proof}

We believe the graph $G[S,r]$ is interesting on its own, for instance it is not difficult to observe that when $n$ and $r=n-1$ the complement of the partition graph,  $\Bar{G[n,n-1]}$ is isomorphic to the Kneser graph $K(n,2)$ (see \cite{Godsil2001}) and when $|S|=n$ is even and $r=2$, $G[S,r]$ is bipartite.      

\section{Tverberg Partition Graphs}\label{sec:tverberggraphs}

In this section, we prove our results on Tverberg partition graphs.  Recall that given a finite set $S \subset \rr^d$, we denote by $G_T[S,r]$ its Tverberg partition graph for partitions into $r$ parts.  Each Tverberg partition with $r$ parts represents a vertex, and two Tverberg partitions $P$ and $P'$ form an edge if and only if $D(P,P')=1$.

To establish our first results, we will use the following theorem by Attila P\'{o}r \cite{attilaphd}, which generalizes Kirchberger's theorem.

\begin{theorem}[Attila P\'{o}r \cite{attilaphd} 1997]\label{Por}
Let $P_1,P_2,\dots,P_r$ be disjoint sets in $\rr^d$. Then $\bigcap_{j=1}^r \conv(P_j)\neq \emptyset$ if and only if for 
$j=1,\dots,r, $ there exists $P_j'\subset P_j$ such that 
$\bigcap_{j=1}^r \conv(P_j' )\neq \emptyset$ and $\left|\bigcup_{i=1}^r P_i'\right|\leq (d+1)(r-1)+1.$
\end{theorem}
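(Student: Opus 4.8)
The plan is to prove the two implications separately, with the forward (``only if'') direction carrying all the content. The converse is immediate: if subsets $P_j' \subseteq P_j$ satisfy $\bigcap_{j=1}^r \conv(P_j') \neq \emptyset$, then monotonicity of the convex hull under inclusion gives $\conv(P_j') \subseteq \conv(P_j)$ for each $j$, whence $\emptyset \neq \bigcap_{j=1}^r \conv(P_j') \subseteq \bigcap_{j=1}^r \conv(P_j)$.

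For the forward direction, I would first fix a point $x \in \bigcap_{j=1}^r \conv(P_j)$. By Carath\'eodory's theorem, for each $j$ there is a subset of $P_j$ of size at most $d+1$ whose convex hull already contains $x$; replacing each $P_j$ by such a subset, I may assume all the $P_j$ are finite (and this keeps $P_j' \subseteq P_j$ at the end). The strategy is then to represent $x$ \emph{simultaneously} as a convex combination of points of every $P_j$ while using as few points in total as possible, which I would do via a basic-feasible-solution (vertex) argument. The crucial modelling choice is not to fix $x$ as a constant, but to let the common intersection point float: I introduce variables $\lambda_{j,p}\geq 0$ for $j=1,\dots,r$ and $p\in P_j$, subject to
\[
\sum_{p\in P_j}\lambda_{j,p}=1 \ \ (j=1,\dots,r), \qquad \sum_{p\in P_j}\lambda_{j,p}\,p=\sum_{p\in P_1}\lambda_{1,p}\,p \ \ (j=2,\dots,r).
\]
The first family gives $r$ scalar (normalization) equations, and the second gives $r-1$ vector equations in $\rr^d$, i.e.\ $(r-1)d$ scalar equations, for a total of $r+(r-1)d=(d+1)(r-1)+1$ equality constraints. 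The point $x$ provides a nonnegative feasible solution, so this polyhedron is nonempty; the normalizations also make it bounded, hence a polytope with at least one vertex.

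At a vertex $\lambda^\ast$ (a basic feasible solution) the columns of the constraint matrix indexed by the strictly positive coordinates are linearly independent, so the number of positive coordinates is at most the number of constraints, namely $(d+1)(r-1)+1$. Setting $P_j'=\{p\in P_j:\lambda^\ast_{j,p}>0\}$ and using that the $P_j$ are disjoint, the size $\bigl|\bigcup_{i=1}^r P_i'\bigr|$ equals the total number of positive coordinates of $\lambda^\ast$, which is at most $(d+1)(r-1)+1$. Finally the common value $x^\ast:=\sum_{p}\lambda^\ast_{j,p}\,p$ lies in $\conv(P_j')$ for every $j$, so $\bigcap_{j=1}^r\conv(P_j')\neq\emptyset$, as required.

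I expect the main subtlety to be precisely the constraint count: it is essential that the ``same point'' condition be phrased as $r-1$ vector equations equating the representations to one another, rather than $r$ equations fixing them all to the prescribed $x$, since the latter would only yield the cruder Carath\'eodory bound $r(d+1)$. This reformulation is what saves the extra $d$ points and lands exactly on $(d+1)(r-1)+1$. The remaining steps are standard linear-programming facts; any rank deficiency in the constraint matrix only lowers the bound, so the argument is robust.
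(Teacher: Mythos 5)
The paper does not actually prove Theorem~\ref{Por}: it is imported as a black box from P\'or's thesis \cite{attilaphd}, so there is no in-paper argument to compare yours against. Judged on its own, your proof is correct and complete. The converse direction is indeed immediate from monotonicity of convex hulls. For the forward direction, the Carath\'eodory preprocessing step is genuinely needed (the sets $P_j$ may be infinite, and the linear program requires finitely many variables), and you include it. The linear-programming core is sound: the feasible region is a nonempty polytope (nonempty because a representation of a common point $x$ gives a feasible $\lambda$; bounded because the normalizations force $0\leq \lambda_{j,p}\leq 1$), so it has a vertex; at a basic feasible solution the columns indexed by the support are linearly independent, so the support has size at most the number of equality constraints, $r+(r-1)d=(d+1)(r-1)+1$; each $P_j'$ is nonempty because of the normalization constraint, and the common value of the $r$ weighted sums witnesses $\bigcap_{j}\conv(P_j')\neq\emptyset$. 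Your closing remark identifies exactly the right subtlety: equating the $r$ representations to one another ($r-1$ vector equations) rather than pinning each to the fixed point $x$ ($r$ vector equations) is what saves $d$ constraints and hence $d$ points, landing on the stated bound rather than the trivial $r(d+1)$. One tiny note: disjointness of the $P_j$ is not even needed for the inequality $\bigl|\bigcup_i P_i'\bigr|\leq (d+1)(r-1)+1$, since overlaps only shrink the union; it merely turns your equality ``union size equals support size'' into an inequality, which is all the statement requires.
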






\subsection{Connectivity}


 Next we will show Theorem \ref{generalconnected}, which claims that $G_T[S,r]$ is a connected graph if the set $S$ of points contains at least $3\tv(d,r)-1$.

 \begin{proof}[Proof of Theorem \ref{generalconnected}]
Consider two different Tverberg partitions $P= P_1,\dots,P_r $ and $Q=Q_1,\dots,Q_r$ of $S$. By Theorem \ref{Por}, there exist subsets $S_P$ and $S_Q$ of $S$ with at most $\tv (d,r)$ points each, and such that $P'=P_1^\prime,\dots ,P_r^\prime$ is a Tverberg $r$-partition of $S_P$ with $P_j^\prime \subset P_j$ for $1\leq j \leq r$ and $Q'=Q_1^\prime,\dots ,Q_r^\prime$ is a Tverberg $r$-partition of $S_Q$ with $Q_j^\prime \subset Q_j$ for $1\leq j \leq r$.
We will construct a sequence of $r$-partitions of $S$, $P_{R_0},P_{R_1}\dots P_{R_k}$ that form a path between $P=P_{R_0}$ and $Q=P_{R_k}$ in $G_T[S,r]$.

Suppose that $S_P\cap S_Q = \emptyset$.  Let $x\in S\setminus S_P$ and assume that $x$ is say in partition $P_i$ in $P$ and $x$ is in partition $Q_j$ in $Q$. Then, starting with partition $P=P_{R_0}$, we begin by shifting only $x$ from partition $P_i$ to  partition $Q_j$. Then we obtain a new Tverberg partition $P_{R_1}$ that differs from $P$ in only one element, and it is closest to $Q$. Recursively we will move the points of $S\setminus S_P$ in $P_{R_t}$ one by one  from their corresponding part in $P$ to their corresponding part in $Q$. 
In each movement, the points of $S_P$ remain untouched, so each partition generated is a Tverberg partition. At the end of this procedure, all the points of $S_P$ and $S_Q$ remain untouched. Next we can similarly move the points of the set $S_P$ to the corresponding part until we get the partition $Q$.


If $S_P\cap S_Q \neq \emptyset$, notice that the set $S_R = S\setminus (S_P \cup S_Q)$ has at least $\tv(d,r)$ points. Then there exists a Tverberg $r$-partition $R$ of $S$ such that $R$ restricted to the points of $S_R$ remains a Tverberg partition. Proceeding as above, we can find a path of Radon partitions going from $P$ to $R$ and another path from $R$ to $Q$. 
\end{proof}

We believe that the bound of $3\tv(d,r)-1$ in Theorem \ref{generalconnected} is not optimal. In Theorem \ref{radon is alway connected} in Section \ref{radon and bounds}, we will show that in the case of $r=2$, the Radon partition graph is always connected. We also analyzed   
Tverberg $3$-partition graphs induced by at least one representative in every order type for sets with $n\in\{7,8,9,10\}$ points. We generated over 2000 Tverberg $4$-partition graphs that belong to different order types chosen randomly and uniformly, observing that in the plane all Tverberg graphs become connected as soon as we have $\tv(2,r)+1$ points.  We conjecture the following. 

\begin{conjecture}
There exists a constant $k(d)$ that depends only on the dimension $d$ such that the Tverberg $r$-partition graph $G_T[S,r]$ of a set of points $S\in R^d$ is connected if $\vert S \vert \geq \tv(d,r)+k(d)$, with $k(1)=1$.
\end{conjecture}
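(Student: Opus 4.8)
The plan is to push the witness-routing idea behind Theorem~\ref{generalconnected} much harder, so that only a constant (in $r$) amount of slack above $\tv(d,r)$ is consumed. Recall the mechanism: by Theorem~\ref{Por} every Tverberg partition $P=(P_1,\dots,P_r)$ has a \emph{witness core} $S_P$ with $|S_P|\le\tv(d,r)$ whose induced partition is already Tverberg, and every point of $S\setminus S_P$ is then \emph{free}, meaning it can be reassigned to any other part without destroying the Tverberg property. Hence, holding $S_P$ fixed, one may transport all free points to prescribed parts one at a time, each intermediate partition being a vertex of $G_T[S,r]$. The whole difficulty of connectivity is therefore concentrated in \emph{relocating core points}, which cannot be moved while $S_P$ is the only Tverberg support available.

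The first thing I would record is a clean sufficient condition for two partitions to be joined: if $P^\ast$ and $Q^\ast$ restrict to the \emph{same} Tverberg partition on a common set $C$ that meets every part, then they are connected, since all points outside $C$ are free in both and can be transported one by one while the fixed Tverberg core on $C$ keeps every intermediate partition valid. This reduces the conjecture to steering an arbitrary $P$, by free moves, to such a normal form. The catch --- and the reason a constant buffer is subtle --- is that one cannot take $C$ to be a minimal ($\tv(d,r)$-size) core: Example~\ref{eg:1} shows such a core may be rigid, its partition graph edgeless. Routing $P$ and $Q$ \emph{around} their cores, i.e.\ placing $C$ disjoint from $S_P\cup S_Q$, needs $|S|\ge |C|+|S_P\cup S_Q|$, which only recovers the $\sim 3\tv(d,r)$ bound. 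Beating it forces us to \emph{reconfigure} the overlapping cores themselves rather than avoid them, reusing points of $S_P\cup S_Q$ inside the reservoir.

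Concretely, the steps I would carry out are: (1) dispatch the large-$n$ regime, where connectivity should follow from the connectedness of $G[S,r]$ together with Theorem~\ref{thm:prob}, since for $n$ large almost every vertex of $G[S,r]$ lies in $G_T[S,r]$ with its full neighborhood (a maximal-degree Tverberg vertex has exactly the same neighbors in both graphs), so only a sparse set of vertices is missing; (2) reduce the remaining near-threshold regime, e.g.\ by induction on $n$ using a free point guaranteed by the degree bound of Theorem~\ref{degree}, so that deletion lands in a smaller Tverberg graph; (3) prove a \emph{core-reconfiguration lemma}: with $k(d)$ spare points available, any single core point of $P$ can be reassigned to a chosen part after temporarily promoting a spare point into the support, and iterating these reassignments drives $P$ to the desired normal form on a fixed reservoir; and (4) establish the base cases, with $k(1)=1$ proved directly from the line description of Tverberg partitions --- each part straddles a common point $p$, and sliding the extreme points across $p$ in turn yields explicit paths for $|S|\ge 2r$ --- and $k(2)=1$ anchored by the order-type computation already reported.

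The genuine obstacle is step (3): controlling \emph{witness rigidity} when only a constant number of points is free. The existential statement of Theorem~\ref{Por} guarantees a small witness but says nothing about how the witnesses of two nearby partitions overlap, or how few reassignments separate them; what the conjecture really needs is a quantitative, stability-type refinement of Kirchberger's/P\'or's theorem asserting that any minimal Tverberg witness can be slid off another using a number of elementary moves bounded by a function of $d$ \emph{alone}, independent of $r$. Whether such a $d$-only bound exists is exactly the crux, and it is also where one must confront the alternative possibility that $k(d)$ grows with the dimension --- which, if true, would most likely be exposed by constructing configurations whose core rigidity scales with $d$.
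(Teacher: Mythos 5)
This statement is posed in the paper as a \emph{conjecture}: the paper contains no proof of it, only supporting evidence, namely Theorem \ref{generalconnected} (connectivity once $|S|\ge 3\tv(d,r)-1$), Theorem \ref{radon is alway connected} (the $r=2$ case, where connectivity holds unconditionally), and finite computational checks in the plane ($r=3$ with $n\le 10$, and random order types for $r=4$). So there is no paper proof to compare against; the only question is whether your proposal itself constitutes a proof, and it does not. You yourself identify step (3) --- the ``core-reconfiguration lemma,'' i.e.\ a quantitative, $d$-only stability refinement of P\'or's Theorem \ref{Por} --- as an open problem, and without it your argument never improves on the $3\tv(d,r)$-type bound already in the paper. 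A plan whose pivotal lemma is acknowledged as unresolved is a research program, not a proof; to your credit, it isolates what is plausibly the true crux, and the paper's authors offer nothing stronger.

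The auxiliary steps also have concrete defects. Step (1) is both superfluous and invalid: superfluous because the regime left open by Theorem \ref{generalconnected} is $\tv(d,r)+k(d)\le n\le 3\tv(d,r)-2$, a window bounded in terms of $r$ and $d$, so there is no separate ``large-$n$ regime'' to dispatch; and invalid because Theorem \ref{thm:prob} only says a $1-o(1)$ fraction of partitions are Tverberg with full degree, whereas connectivity is a statement about \emph{every} Tverberg partition, including the atypical low-degree ones --- a vanishing fraction of exceptional vertices can still contain isolated vertices (Example \ref{eg:1} is a Tverberg graph consisting entirely of isolated vertices). Step (2) fares no better: the lifting direction is fine (adding a point to any part of a Tverberg partition of $S\setminus\{x\}$ keeps it Tverberg, so paths lift), but restricting both $P$ and $Q$ to a common $S\setminus\{x\}$ requires a point simultaneously non-essential for both, which the counting behind Theorem \ref{degree} only guarantees when $2\bigl(n-\tv(d,r)+1\bigr)>n$, i.e.\ $n\ge 2\tv(d,r)-1$; and the induction then needs $G_T[S\setminus\{x\},r]$ connected, a set of $n-1$ points falling below that threshold, so the recursion never closes on any base case better than the paper's $3\tv(d,r)-1$. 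Finally, step (4): the claim $k(2)=1$ cannot be ``anchored'' by the reported computations, which cover only finitely many configurations and two values of $r$, and even the $k(1)=1$ sliding argument on the line is asserted rather than carried out, so not even the base cases of your program are actually established.
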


\subsection{Maximum and minimum degree}\label{MaxyMin}




To prove our results related to the minimum degree of Tverberg graphs, we use the following strong form of the Colorful Carath\'eodory theorem \cite{Barany1982}.

\begin{theorem}[B\'ar\'any 1982]\label{caratheodory}
    Let $d$ be a positive integer and $p$ and $q_{d+1}$ be two points in $\rr^d$.  If we have $d$ sets $X_1, \dots, X_d$ in $\rr^d$ such that $p \in \conv (X_i)$ for $i=1,\dots, d$, we can choose $d$ points $q_1 \in X_1, \dots, q_d \in X_d $ such that
    \[
p \in \conv \{q_1, q_2, \dots, q_{d+1}\}.
    \]
\end{theorem}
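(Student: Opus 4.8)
The plan is to run B\'ar\'any's nearest-point argument, taking care that the one ``color'' corresponding to the prescribed point $q_{d+1}$ is never the one we move. First I would reduce to finite color classes: since $p\in\conv(X_i)$, Carath\'eodory's theorem gives a finite $Y_i\subset X_i$ with at most $d+1$ points and $p\in\conv(Y_i)$, so I may replace each $X_i$ by $Y_i$. Then there are only finitely many \emph{rainbow sets} $T=\{q_1,\dots,q_d,q_{d+1}\}$ with $q_i\in Y_i$ for $i\le d$ and $q_{d+1}$ the fixed point, each with compact convex hull, so I can choose one minimizing the distance $\operatorname{dist}(p,\conv(T))$. The goal is to show this minimizer satisfies $p\in\conv(T)$.

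Suppose not, and let $x$ be the point of $\conv(T)$ nearest to $p$ and $n=p-x\neq 0$. The nearest-point characterization gives $\langle n,\,q_i-x\rangle\le 0$ for every vertex of $T$, while $\langle n,\,p-x\rangle=\|n\|^2>0$; thus the hyperplane $H=\{y:\langle n,y-x\rangle=0\}$ separates $p$ (strictly on the positive side) from $\conv(T)$. Writing $x=\sum_i\mu_i q_i$ as a convex combination and using $\langle n,\,q_i-x\rangle\le 0$ together with $\sum_i\mu_i\langle n,\,q_i-x\rangle=0$, I get that every vertex in the support of $x$ (those with $\mu_i>0$) lies on $H$.

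Now comes the improvement step, which is the crux and the only place the ``strong'' hypothesis matters. Fix any changeable color $j\le d$ (one exists since $d\ge 1$). Because $p\in\conv(Y_j)$ and $p$ lies strictly on the positive side of $H$, some $q_j'\in Y_j$ must also lie strictly on the positive side, for otherwise $\conv(Y_j)$ would lie in the closed non-positive halfspace and could not contain $p$. Replacing $q_j$ by $q_j'$ yields a new rainbow set $T'$, leaving $q_{d+1}$ untouched. If $\mu_j>0$, then $q_j\in H$ and the point $z=x+\mu_j(q_j'-q_j)\in\conv(T')$ satisfies $\langle n,\,z-x\rangle=\mu_j\langle n,\,q_j'-x\rangle>0$; if $\mu_j=0$, then $x\in\conv(T')$ already and I instead look along the segment toward $q_j'\in\conv(T')$. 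In either case a point of $\conv(T')$ moves off $x$ in a direction making positive inner product with $n=p-x$, so a short step along the corresponding segment lands strictly closer to $p$, contradicting the minimality of $T$. Hence $p\in\conv(T)$, which is the assertion.

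The main obstacle is exactly this last step: in the textbook colorful Carath\'eodory theorem all $d+1$ color classes are sets containing $p$ in their hull, so any of them may be swapped, whereas here the $(d+1)$-st ``color'' is the single fixed point $q_{d+1}$ and cannot be changed. The resolution is the observation that we never need to: the separation argument provides a swap candidate in \emph{every} genuine color class $X_1,\dots,X_d$, and the support-on-$H$ computation guarantees the swap strictly decreases the distance to $p$ regardless of whether $q_{d+1}$ participates in the nearest-point combination. I would double-check two routine points, namely that the nearest point exists (compactness after the finite reduction) and that the rainbow family is finite so that the minimum is attained.
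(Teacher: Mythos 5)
Your overall strategy is the standard minimum-distance (nearest-point) proof of the colorful Carath\'eodory theorem, which is a reasonable starting point; note that the paper itself does not prove this statement (it cites B\'ar\'any's 1982 paper), so your argument must stand on its own. It does not: the step you yourself identify as the crux, the case $\mu_j>0$, is invalid. There you produce $z=x+\mu_j(q_j'-q_j)\in\conv(T')$ with $\langle n,z-x\rangle>0$ and conclude that ``a short step along the corresponding segment lands strictly closer to $p$.'' But in this case $x\notin\conv(T')$ (the only representations of $x$ inside $\conv(T)$ may require $q_j$, which you have discarded), so the segment $[x,z]$ is not contained in $\conv(T')$; and $z$ itself need not be closer to $p$ than $x$. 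Having one point of $\conv(T')$ on the positive side of $H$ says nothing about $\operatorname{dist}(p,\conv(T'))$. Concrete refutation of the inference, with $d=2$: let $p=(0,1)$, $q_1=(-1,0)\in X_1$, $q_2=(1,0)\in X_2$, $q_3=q_{d+1}=(1/2,-1)$. Then $x=(0,0)$, $n=(0,1)$, $H$ is the horizontal axis, and $\mu_1=\mu_2=1/2>0$, $\mu_3=0$. Take $X_1=\{(-1,0),(1000,\varepsilon),(0,3)\}$, which does contain $p$ in its convex hull, and suppose the swap candidate chosen for $j=1$ is $q_1'=(1000,\varepsilon)$ --- it lies strictly on the positive side of $H$, so nothing in your argument excludes it. The new hull $\conv(T')$ with $T'=\{(1000,\varepsilon),(1,0),(1/2,-1)\}$ lies in the half-plane $\{u\ge 1/2\}$, and for $1/2\le u<1$ its points have second coordinate at most $2u-2\le 0$, whereas every point of the open unit ball around $p$ has positive second coordinate; hence $\operatorname{dist}(p,\conv(T'))\ge 1=\|p-x\|$, even though $z=(500.5,\varepsilon/2)\in\conv(T')$ satisfies $\langle n,z-x\rangle>0$. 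Since your derivation of the contradiction never actually uses the minimality of $T$ in this step, this configuration shows the derivation is unsound.

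This failure is not incidental: it is exactly the new difficulty that distinguishes this ``strong'' version from the classical colorful Carath\'eodory theorem. In the classical proof one swaps a color whose point is \emph{unused} in the representation of $x$; such a color exists because the support of $x$ lies in the $(d-1)$-dimensional hyperplane $H$, so by Carath\'eodory at most $d$ of the $d+1$ points are needed. Swapping an unused point keeps $x\in\conv(T')$, which is what legitimizes moving slightly from $x$ toward $q_j'$ (your $\mu_j=0$ case, which is correct). Here, however, the $(d+1)$-st color is the frozen point $q_{d+1}$, so the one guaranteed unused point may be $q_{d+1}$ itself, leaving every changeable color with $\mu_j>0$ --- and this situation genuinely occurs, as the example above shows. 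Handling it requires a real additional idea (e.g., a much more careful choice of the swap point, or B\'ar\'any's own argument for his Theorem); the observation you offer in its place does not close the gap.
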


Next we will prove Theorem \ref{degree}, which asserts that the degree $\delta(v)$ of any vertex $v\in V(G_T[S,r])$ is bounded by $\Big(n+1 - \tv(d,r) \Big)(r-1)\leq \operatorname{deg}(v) \leq n(r-1)$ when $n>\tv(d,r)$.

\begin{proof}[Proof of Theorem \ref{degree}]
We can assume without loss of generality that the set of points $S$ is algebraically independent.  Let $P=P_1, P_2, \dots, P_r$ be a Tverberg partition of $S$.

By Theorem \ref{Por}, there exists a subset $S' \subset S$ of at most $\tv(d,r)$ points such that the sets $P'_i = P_i \cap S'$ for $i=1,\dots, r$ form a Tverberg partition.   Note that we can change any point in $S \setminus S'$ in a partition, say $P_i$, to any of the remaining partitions and still have a Tverberg partition.  This gives us $|S\setminus S'|\cdot (r-1) = (n-\tv(d,r))(r-1)$ edges in $G_T[S,r]$ incident in $P$.  We need $r-1$ more edges to achieve the lower bound.

We claim that there exists $x \in S'$ such that $S \setminus \{x\}$ is a Tverberg partition.  If this holds, and $x$ is in say partition  $P_i$ for some $i$, it means that we can change $x$ to any of the $r-1$ remaining partitions and still have a Tverberg partition.  

Since the points of $S$ are algebraically independent, we know that $\bigcap_{j=1}^r \conv(P'_j)$ is a single point $p$ and it is in the relative interior of each $\conv(P'_j)$.  We also know that $|P'_j| \le d+1$ for each $j=1,\dots, r$.

Let $y \in S\setminus S'$.  We assume without loss of generality that $y \in P_1$. 

Assume first that  $\vert P^\prime_1 \vert = d+1$.  If this is the case, we can use the Colorful Carath\'eodory Theorem \ref{caratheodory} with $d$ copies of $P'_1$ and one copy of $\{y\}$ (playing the role of $q_{d+1}$).  Since the $d$ copies contain $p$ in their convex hull, there is a choice of at most $d$ points of $P'_1$ such that the convex hull of those points and $y$ contains $p$.  In other words, there exists a point $x \in P'_1$ such that $p \in \conv((P'_1 \setminus \{x\}) \cup \{y\})$, as we wanted.

Then we may assume that we have $|P'_1| \le d$, so the dimension of $\conv (P'_1)$ is at most $d-1$.  Therefore, $\dim (\conv((P'_1 \cup \{y\})) = 1 + \dim(\conv(P'_1))$.  This means that
\[
\dim \Big(\conv(P'_1 \cup \{y\}) \cap \conv(P'_2) \cap \dots \cap \conv(P'_r) \Big) = 1.
\]
Let $\ell = \conv((P'_1 \cup \{y\}) \cap \conv(P'_2) \cap \dots \cap \conv(P'_r)$.  By the arguments above, $\ell$ is a segment, with one endpoint being $p$.  Let $q$ be the other endpoint.  Since $q$ is an endpoint, it means that $q$ is either on the boundary of a set of the form $\conv(P'_j)$ with $j>1$ or on the boundary of $\conv((P'_1 \cup \{y\})$.  In the first case, there exists $x \in P'_j$ such that $q \in \conv(P'_j \setminus {x})$.  In the second case, since $q \neq p$, there exists $x \in P'_j$ such that $q \in \conv(P'_1\setminus\{x\} \cup \{y\})$.  In either case, we know that $x \in S'$, and if we remove $x$ we still have a Tverberg partition, as we wanted. 
\medskip

The right-hand side of the equality follows simply from counting the maximum number of $r$-partitions that differ by exactly one element from $P$. \end{proof}

Next, we will relate the maximum degree of the Tverberg partition graph $G$ to Tverberg's theorem with tolerance.  Recall that $N(d,t,r)$ is the minimum number of points in $\rr^d$ to guarantee the existence of Tverberg partitions with tolerance $t$.  As mentioned in the introduction, the current best asymptotic bound for $N(d,t,r)$ is $N(d,t,r) = rt + \tilde{O}(r^2\sqrt{td}+ r^3d)$, where $\tilde{O}$ hides polylogarithmic factors in $r,t,d$ \cite{Soberon:2018gn}.  For small values of $t,d,r$, we have the bound $N(d,t,r)\le (t+1)(r-1)(d+1) + 1$ in general, and the bound $N(d,t,r) \leq 2^{d-1}(r(t+2)-1)$, which is better for low dimensions \cite{Soberon:2012er, mulzer2014algorithms}.

This means that as long as $|S| \ge N(d,1,r) \ge \min \{ 2(r-1)(d+1)+1, 2^{d-1}(3t-1), rt + \tilde{O}(r^2\sqrt{td}+ r^3d)\}$, the Tverberg graph for $S$ will have at least one vertex of maximum degree.  We prove a stronger result, showing that as $|S|$ increases, almost all the vertices of the Tverberg graph will have maximum degree.  First, we state a precise version of Theorem \ref{thm:prob}.

\medskip
\noindent\textbf{Theorem \ref{thm:prob}.} 
\emph{Let $n,r,d$ be positive integers and $S$ be a set of $n$ points in $\rr^d$.  We assign to each point of $S$ a label from $1,\dots, r$, uniformly and independently.  This forms a random partition of $S$.  The probability that such a partition is a Tverberg partition into $r$ parts with degree $n(r-1)$ in $G_T[S,r]$ is at least
    \[
1- r^{(r-1)(d+1)}n^{(r-1)(d+1)}\exp\left(\frac{-2(n-r)^2}{nr^2}\right).
    \]
}
\begin{proof}
    To prove this result, we first use Sarkaria's transformation \cite{Sarkaria:1992vt}.  This is a standard technique to reduce Tverberg-type problems to variations of the colorful Carath\'eodory theorem.  The version we use is a simplification by B\'ar\'any and Onn \cite{Barany:1995tg}.

    We introduce $r$ vectors $v_1, v_2, \dots, v_r$, which are the vertices of a regular simplex in $\rr^{r-1}$ centered at the origin.  Then for each $a \in \rr^d$, we consider the set
    \[
    X_a = \{(a,1) \otimes v_j \in \rr^{(d+1)(r-1)} \colon j =1,\dots, r\},
    \]
    where $\otimes$ stands for the tensor product.  Given a set $a_1, \dots, a_n$ of points in $\rr^d$, picking a representative of each set $X_{a_1}, \dots, X_{a_n}$ is equivalent to making a partition of $a_1, \dots, a_n$ into $r$ parts.  Moreover, this partition is a Tverberg partition if and only if the convex hull of the choice made in $\rr^{(r-1)(d+1)}$ contains the origin (see, e.g., \cite{Barany:2018fy}).

    Given $a_1, \dots, a_n$ points in $\rr^d$, we pick a point at random from each of $X_{a_1}, \dots, X_{a_n}$, uniformly and independently.  Let $N$ be the resulting set.  Let $H$ be a half-space in $\rr^{(r-1)(d+1)}$ whose boundary hyperplane contains the origin.  Let us bound the probability that $|H\cap N| \le 1$.

    Let $y_i$ be our choice from $X_{a_i}$.  Since $\conv(X_{a_i})$ contains the origin in $\rr^{(r-1)(d+1)}$, we have $H \cap X_{a_i} \neq \emptyset$. Therefore, $\mathbb{P} (y_i \in H) \ge 1/r$.  Using the linearity of expectation, 
    \[
\mathbb{E}[|H \cap N|] = \sum_{i=1}^n \mathbb{P} (y_i \in H) \ge n/r.
    \]
    Since this expectation is a sum of $n$ independent random variables that take values in $[0,1]$, we can apply Hoeffding's inequality to obtain that for every $0\le \lambda \le n/r$, we have
    \[
    \mathbb{P}(|H \cap N| \le n/r - \lambda) \le \mathbb{P}(|H \cap N| \le \mathbb{E}[|H \cap N|] - \lambda)\le \exp(-2\lambda^2/n).
    \]
    Let us choose $\lambda = (n/r)-1$, so we have
    \[
    \mathbb{P}(|H \cap N| \le 1) \le \exp(-2(n-r)^2/r^2n).
    \]
    Now, consider the set of points $X=\bigcup\{X_{a_i} \colon i =1,\dots, n\}\subset \rr^{(r-1)(d+1)}$. For each point $x \in X$, consider the hyperplane $H_x = \{y \colon \langle x,y\rangle = 0\}$, where $\langle \cdot, \cdot \rangle$ denotes the standard dot product.  This hyperplane arrangement has at most $|X|^{(r-1)(d+1)}$ cells, which means (by duality again) that there are at most $|X|^{(r-1)(d+1)}$ possible subsets of $X$ that a half-space $H$ as above can contain.  Therefore, a union bound shows that the probability that there exists a half-space $H$ for which $|H \cap N| \le 1$ is bounded above by 
    \[
  |X|^{(r-1)(d+1)}\exp\left(\frac{-2(n-r)^2}{nr^2}\right).
    \]
    Alternatively, if $|H \cap N| \ge 2$ for every possible $H$, removing any point of $N$ still gives us a set that contains the origin in $\rr^{(r-1)(d+1)}$.  This means that removing any of the points $a_1, \dots, a_n$ from the original set in $\rr^d$ still gives us a Tverberg partition.  We finish the proof by observing that $|X| = nr$.
\end{proof}

This result shows that as $n$ increases, almost all vertices of the Tverberg graph reach maximum degree very quickly, as the exponential decay shrinks much faster than $n^{(r-1)(d+1)}$.  Moreover, the Tverberg graph and the partition graph are extremely similar.

\subsection{Cliques}

As in the case of Proposition \ref{cliquepartitiongraph} on cliques of partition graphs, the following proposition shows that Tverberg partition graphs behave in a very similar way.

\begin{proposition}
Let $G_T[S,r]$ be the Tverberg $r$-partition graph of a set $S$ with $n>\tv(d,r)$ points in $\rr^d$.  Then the clique number $\omega(G_T[S,r])$ is $r$.
\end{proposition}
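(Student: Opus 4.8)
The plan is to prove the two inequalities $\omega(G_T[S,r]) \le r$ and $\omega(G_T[S,r]) \ge r$ separately, reusing the structural facts already established. For the upper bound I would simply observe that $G_T[S,r]$ is a subgraph of $G[S,r]$: its vertices are a subset of all $r$-partitions of $S$, and two of them are adjacent precisely when their partition distance is $1$, which is the same edge relation used to define $G[S,r]$. Since $n > \tv(d,r) = (d+1)(r-1)+1 \ge r$, Proposition \ref{cliquepartitiongraph} applies and gives $\omega(G[S,r]) = r$. Any clique of $G_T[S,r]$ is in particular a clique of $G[S,r]$, so $\omega(G_T[S,r]) \le \omega(G[S,r]) = r$. (Alternatively one could repeat the argument from Proposition \ref{cliquepartitiongraph} verbatim, since it only uses the distance-$1$ adjacency relation.)

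For the lower bound I would exhibit an explicit clique of size $r$. By Tverberg's theorem there is at least one Tverberg partition $P = P_1, \dots, P_r$ of $S$. The crucial input is the claim proved inside the proof of Theorem \ref{degree}: there exists a point $x \in S$, belonging to some part $P_i$, whose removal still leaves a Tverberg partition, i.e. the restriction of $P$ to $S \setminus \{x\}$ has a common point $p$ lying in the intersection of the convex hulls of all its parts. For each $j \neq i$, form the partition $P^{(j)}$ obtained from $P$ by moving $x$ from $P_i$ to $P_j$. Since $\conv(P_j \cup \{x\}) \supseteq \conv(P_j)$ still contains $p$, the part $P_i \setminus \{x\}$ still contains $p$ in its convex hull, and all other parts are unchanged, each $P^{(j)}$ is again a Tverberg partition.

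These $r$ partitions — namely $P$ together with the $r-1$ partitions $P^{(j)}$ — all coincide on $S \setminus \{x\}$ and differ only in the part to which $x$ is assigned, so any two of them have partition distance exactly $1$ and are pairwise adjacent in $G_T[S,r]$. Hence they form a clique of size $r$, which yields $\omega(G_T[S,r]) \ge r$ and, combined with the upper bound, completes the proof. The only nontrivial ingredient is the existence of the movable point $x$; this is exactly the geometric statement already verified in the course of proving Theorem \ref{degree}, and I expect it to be the single real obstacle, since without it one would only know that the $P^{(j)}$ are $r$-partitions, not that they remain Tverberg. Everything else is the same counting as in Proposition \ref{cliquepartitiongraph}.
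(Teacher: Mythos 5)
Your proposal is correct and follows essentially the same route as the paper: the upper bound via the subgraph relation with $G[S,r]$ together with Proposition \ref{cliquepartitiongraph}, and the lower bound by taking one removable point $x$ and reassigning it to each of the other $r-1$ parts to get $r$ pairwise-adjacent Tverberg partitions. The only cosmetic difference is the source of the removable point: the paper takes any $x \in S\setminus S'$ given by P\'or's theorem, while you invoke the claim established inside the proof of Theorem \ref{degree}, which rests on the same fact.
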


\begin{proof}
Since the Tverberg $r$-partition graph $G_T[S,r]$ is a subgraph of the partition graph $G[S,r]$, and by Proposition \ref{cliquepartitiongraph}, $\omega(G[S,r])=r$, then $\omega(G_T[S,r])\leq r$.
 Let $T_1,T_2,\dots,T_k$ be a family of Tverberg $r$-partitions that induces a clique in $G_T[S,r]$. Since $D(T_1,T_2)=D(T_2,T_3)=1$, there exists a point $x\in S$ and a part $T_{1x}$ of $T_1$ such that if we move $x$ to $T_{1x}$ we obtain $T_2$. Analogously, there exists a point $y\in S$ and a part $T_{2y}$ of $T_2$ such that if we move $y$ to $T_{2y}$ we obtain $T_3$.
 If $x\neq y$, then $D(T_1,T_3)>1$, but this is a contradiction, so $x=y$. Inductively, we have that the point at which all these partitions differ is exactly $x$, and this implies that $\omega(G_T[S,r])\leq r$. 

Now let $P=P_{1}, P_{2}, \dots, P_{r}$ be a Tverberg partition of $S$. By P\'or's theorem, there exists a subset $S' \subset S$ of at most $\tv(d,r)$ points such that the sets $P'_{i} = P_{i} \cap S'$ for $i=1,\dots, r$ form a Tverberg partition. Consider a point $x\in S\setminus S^\prime$. Without loss of generality, suppose that $x\in P_{1}$. We can move $x$ to every part $P_{i}$ for $2\leq i \leq r$ and the resulting partitions $P_2,\dots ,P_r$ remain Tverberg. Moreover, every pair of the partitions $P_1,P_2,\dots,P_r$ are at distance one in $G_T[S,r]$, so they induce a complete subgraph of order $r$. 
Then $\omega(G_T[S,r])\geq r$.
\end{proof}

\section{Radon partition graphs and improved bounds}\label{radon and bounds}


In this section we are interested in obtaining better bounds on the same structural properties that we studied in the previous section by fixing the dimension, reducing the number of parts in our partitions, or by imposing additional conditions on the set of points.  

We begin by proving a version of Theorem \ref{generalconnected} when $r=2$; that is, when we have Radon partitions. In this case, we refer to their associated graphs as \emph{Radon partition graphs}. As we may observe, the following result does not even need the points to be in general position. 

\begin{theorem}\label{radon is alway connected}
The Radon partition graph $G_T[S,2]$ of any set $S$ of points $\rr^d$ is connected. 
\end{theorem}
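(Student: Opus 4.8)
The plan is to argue by induction on $n=|S|$, after a reformulation that makes the edge structure transparent. A $2$-partition of $S$ into nonempty parts $(A,B)$ is recorded by the sign vector sending each point to $+$ or $-$ (up to a global flip), an edge of $G_T[S,2]$ being a single coordinate flip; since Radon-ness depends only on the affine hull of $S$, I may assume $S$ affinely spans $\rr^d$, so the space of affine dependences of $S$ has dimension $n-(d+1)$ and $n\ge \tv(d,2)=d+2$. The two facts I would lean on are: (i) \emph{free points give edges}: if a point $x$ can be deleted from a Radon partition $(A,B)$ so that the restriction to $S\setminus\{x\}$ is still Radon, then moving $x$ to the opposite (nonempty) part is an edge of $G_T[S,2]$, because a witness of $\conv(A\setminus\{x\})\cap\conv(B\setminus\{x\})$ survives the move; and (ii) P\'or's Theorem \ref{Por}, which says every Radon partition has a core $S'$ with $|S'|\le d+2$ whose restriction is already Radon, so every point outside $S'$ is free.

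The base case $n=d+2$ is direct: the space of affine dependences is one-dimensional, so there is a single minimal Radon partition $Z$ (up to global flip), and every Radon partition is obtained from $Z$ by assigning the coordinates outside $\operatorname{supp}(Z)$ — exactly the free points — arbitrarily; under the flip operation these form a subcube, hence a connected graph. For the inductive step I would remove a point $x$, set $S_0=S\setminus\{x\}$, and use that $G_T[S_0,2]$ is connected by induction. Consider the restriction map from Radon partitions of $S$ to their restrictions to $S_0$. A Radon partition of $S$ whose restriction is again Radon has $x$ as a free point, and the fiber over each vertex $\tau$ of $G_T[S_0,2]$ consists of the two extensions $\tau\cup(x\mapsto A)$ and $\tau\cup(x\mapsto B)$, both Radon (enlarging a part cannot destroy an intersection) and joined by the edge that flips $x$; meanwhile each edge of $G_T[S_0,2]$ lifts by fixing the side of $x$. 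This covering-type behavior shows that the partitions of $S$ that restrict to Radon partitions induce a connected subgraph $\mathcal C$ of $G_T[S,2]$.

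It then remains to attach to $\mathcal C$ the \emph{$x$-essential} Radon partitions $\sigma$, those whose restriction to $S_0$ is not Radon, equivalently for which every minimal Radon partition dominated by $\sigma$ contains $x$ in its support. Since $n>d+2$, P\'or gives a free point $j\neq x$, and flipping free points moves $\sigma$ among Radon partitions without touching its core; the goal is to reach, through such flips, a partition whose restriction to $S_0$ becomes Radon, i.e.\ to expose a minimal Radon partition avoiding $x$. I expect this last step to be the main obstacle, and precisely the point where $r=2$ must improve on Theorem \ref{generalconnected}: when $n$ is small (roughly between $d+2$ and $2d+3$) the cores of the two partitions one wishes to connect necessarily overlap, so the disjoint-core strategy behind Theorem \ref{generalconnected} is unavailable and extra points cannot be reserved. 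The natural way to push through is a circuit-exchange argument on the minimal affine dependences of $S$: beginning from a circuit through $x$, perform elementary exchanges that flip one free coordinate at a time to transfer the support off $x$, each exchange realized by an edge of $G_T[S,2]$. Verifying that such an exchange sequence stays inside the Radon region and terminates in $\mathcal C$ is the crux of the proof.
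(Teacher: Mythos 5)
Your setup is fine as far as it goes: the base case $n=d+2$ (one-dimensional space of affine dependences, so the Radon partitions form a subcube indexed by the points outside the support of the unique circuit) is correct, and so is the covering-type argument showing that the set $\mathcal C$ of Radon partitions of $S$ whose restriction to $S_0=S\setminus\{x\}$ is again Radon induces a connected subgraph once $G_T[S_0,2]$ is connected. But the proof stops exactly where the theorem lives. You never show that an $x$-essential partition $\sigma$ can be joined to $\mathcal C$, and you say so yourself (``the crux of the proof''). The sketch you offer in its place does not work as stated: flipping points that are \emph{free} for $\sigma$ (points outside a P\'or core) never changes which minimal dependences are compatible with $\sigma$, so such flips alone cannot expose a circuit avoiding $x$; and whether a sequence of single-point moves that \emph{does} change the core can be carried out while every intermediate partition stays Radon is precisely the connectivity statement you are trying to prove. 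So the ``circuit-exchange'' step is not a reduction, it is the whole problem, and the proposal has a genuine gap.

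The missing idea --- which is the paper's entire proof --- is to work linearly in the space of affine dependences rather than combinatorially on cores. Given two Radon partitions $P$ and $Q$ of $S=\{s_1,\dots,s_n\}$, pick vectors $p,q\in\rr^n$ with $\sum_i \alpha_i=0$, $\sum_i \alpha_i s_i=0$ (and likewise for $q$) whose sign patterns are compatible with $P$ and $Q$, oriented so that $p$ and $q$ are not negative multiples of each other. Then $\theta(t)=(1-t)p+tq$ stays in the linear space of affine dependences and never vanishes, so every $\theta(t)$ witnesses a Radon partition; each time a coordinate of $\theta(t)$ changes sign, exactly one point crosses to the other part, which is an edge of $G_T[S,2]$ (simultaneous sign changes are serialized in any order). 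This handles an arbitrary pair $P,Q$ in one stroke, with no induction and no separate treatment of your $x$-essential vertices; it is also exactly the tool you would need to close your own gap, by interpolating from a dependence witnessing $\sigma$ to a nonzero dependence whose $x$-coordinate is zero (such a dependence exists as soon as the dependence space has dimension at least $2$, i.e.\ $n\ge d+3$). Once you grant that interpolation lemma, however, your induction scaffolding becomes unnecessary, so I would recommend abandoning the inductive framing altogether.
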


\begin{proof}
If the set $S$ has at most one Radon partition, then $G_T[S,2]$ is the null graph or consists of a single vertex; in either case $G_T[S,2]$ is connected. Let $n$ be the number of points of $S$.

Suppose that there are at least two Radon partitions $P=P_1, P_2$ and $Q=Q_1, Q_2$ of $S$. We will construct a sequence of Radon partitions starting in $P$ and finishing in $Q$ such that two adjacent partitions differ by only one element.  
Since $P$ and $Q$ are Radon partitions, there exist $p=(\alpha_1,\alpha_2,\dots ,\alpha_{n})$ and $q=(\beta_1,\beta_2,\dots ,\beta_{n})$ points in $\rr^{n}$ with entries not all $0$ such that $\sum_{i=1}^{n}\alpha_i=0$, $\sum_{i=1}^{n}\beta_i=0$ and $\sum_{i=1}^{n}\alpha_i p_i=0$, $\sum_{i=1}^{n}\beta_i p_i=0$. Furthermore, we can assume that the points of $S$ that lie in the parts $P_1$ and $Q_1$ all have positive coefficients, while the points of $S$ that lie in $P_2$ and $Q_2$ all have negative coefficients (or vice versa).  Since $P \neq Q$, we know that $p$ and $q$ are not scalar multiples of each other.

Now, consider the convex segment $\{\theta(t)=(1-t)p+tq \colon t\in[0,1]\}$ in $\rr^{n}$. Since $P$ and $Q$ are different partitions,  $\theta(t)$ can never have all entries equal to zero.
So, if we move $t$ continuously from $0$ to $1$, $\theta(t)$ moves from $p$ to $q$ in the convex segment. During this process, each time one of the entries of $\theta(t)$ changes sign, say the entry $i$, then the point $s_i$ moves from one part to the other, but the resulting new partition is still a Radon partition.
In this way, every time that the sign of an entry changes, this defines a partition of our sequence, and if for some $t$ the number of entries changing sign is more than one, we define a sequence of Radon partitions changing the respective points one by one in any order.   
\end{proof}

\begin{figure}[h]
\centering
\includegraphics[scale=.12]{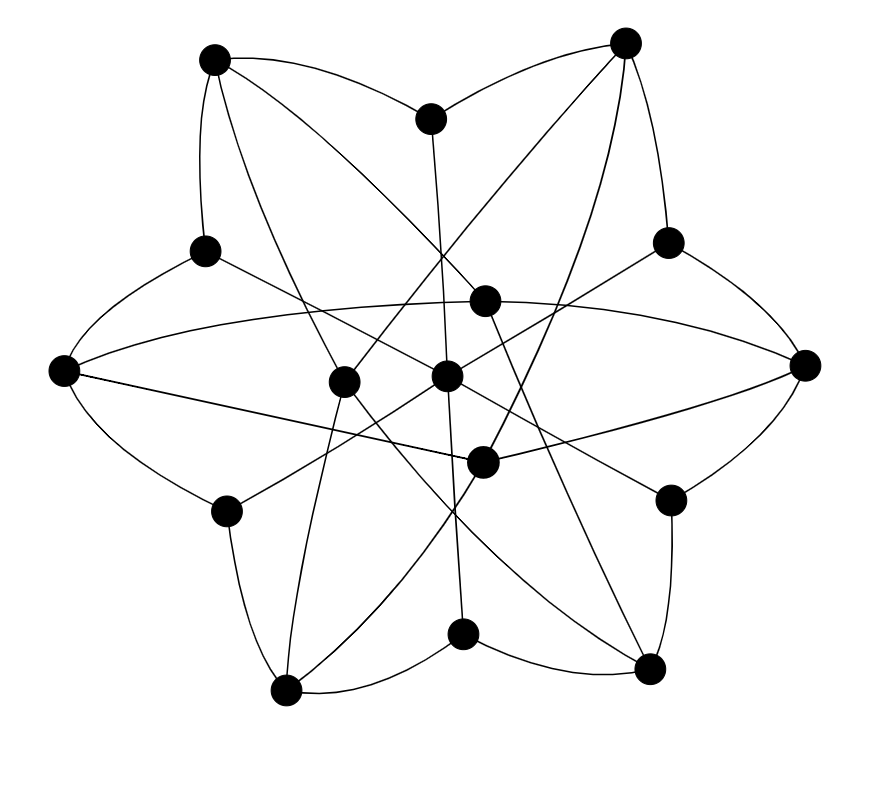}
\hskip 1.2cm
\includegraphics[scale=.16]{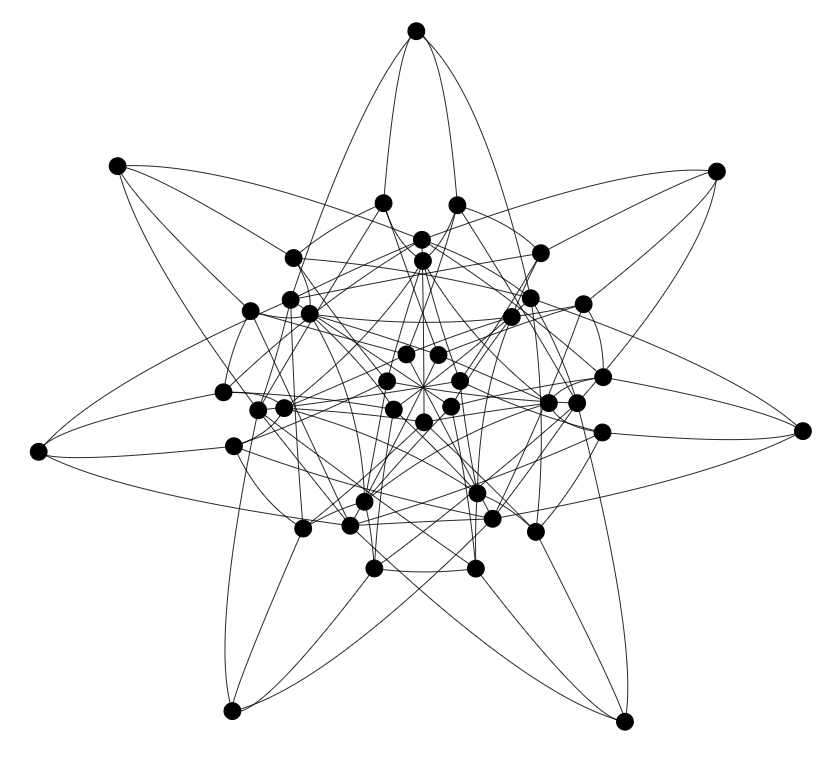}
\caption{Radon partition graphs induced by the vertices of a regular hexagon (left) and a regular heptagon (right).}\label{Radon_graphs}
\end{figure}


The following proposition shows that if $r=2$, the Radon partition graph always has a vertex that attains the minimum possible degree stated in Theorem \ref{degree} in the plane.

\begin{proposition}\label{degreeradonplane}
 Given any set $S\subset \rr^2$ with $n > 4$ points in general position, there exists a Radon partition $P$ of $S$ such that $\deg (P)= n-3$ in the Radon partition graph $G_T[S,2]$.   
\end{proposition}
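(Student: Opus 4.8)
The plan is to pass to the space of affine dependences of $S$ and translate the statement into a question about a central hyperplane arrangement. Write $S=\{p_1,\dots,p_n\}$, set $m=n-3$, and let
\[
W=\Big\{\alpha\in\rr^{n}:\textstyle\sum_i\alpha_i=0,\ \sum_i\alpha_i p_i=0\Big\}
\]
be the space of affine dependences. Since $S$ is in general position and $n\ge 5$, we have $\dim W=m$. Every nonzero $\alpha\in W$ yields a Radon partition by the signs of its coordinates: the points with $\alpha_i>0$ form one part and those with $\alpha_i<0$ the other (both nonempty because $\sum_i\alpha_i=0$), and $\tfrac1s\sum_{\alpha_i>0}\alpha_i p_i$, with $s=\sum_{\alpha_i>0}\alpha_i$, lies in both convex hulls. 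I would then study the central arrangement $\mathcal A=\{H_i\}_{i=1}^n$ in $W$, where $H_i=\{\alpha\in W:\alpha_i=0\}$. A full-dimensional chamber $C$ has a constant sign vector $\sigma\in\{+,-\}^n$ and hence corresponds to a Radon partition $P_\sigma$ in which every point is strictly assigned; these are exactly the vertices of $G_T[S,2]$ realizable with all coordinates nonzero.

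First I would establish the dictionary between edges incident to $P_\sigma$ and facets of $C$. Moving a single point $p_j$ flips the sign $\sigma_j$, i.e.\ crosses $H_j$, and the neighbour is again a Radon partition exactly when $H_j$ supports a facet of $C$. The only delicate direction is that flipping across a hyperplane which does \emph{not} bound $C$ never produces a Radon partition when $C$ is simplicial: if $C=\operatorname{cone}(r_1,\dots,r_m)$ and $H_j$ is not a facet, then each extreme ray $r_t$ (the intersection of the other $m-1$ facet hyperplanes) avoids $H_j$ by simplicity, so $\alpha_j$ has a constant nonzero sign on $\bar C\setminus\{0\}$; the weak sign constraints of the flipped partition force the certifying dependence to lie in $\bar C$, a contradiction. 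Hence a \emph{simplicial} chamber has degree exactly its number of facets, namely $m=n-3$.

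Next I would verify that general position makes $\mathcal A$ simple, essential, and central, so simplicial chambers are available. The functionals $\{\alpha\mapsto\alpha_i:i\in I\}$ are linearly independent on $W$ iff no nonzero affine function vanishes on $\{p_i:i\notin I\}$, i.e.\ iff the complementary points are not collinear. For every $I$ with $|I|\le n-3$ the complement has at least three points, which are non-collinear by general position; thus any $n-3$ of the $H_i$ meet only at the origin, so $\mathcal A$ is simple and essential. Notably this uses only general position, not strong general position.

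The heart of the argument, and the step I expect to be the main obstacle, is to exhibit a simplicial chamber of $\mathcal A$. On the unit sphere of $W\cong\rr^{\,m}$ the subspheres $H_i$ form a simple spherical arrangement, and I need one of its chambers to be a spherical simplex; this is the existence of a simplicial region, a standard fact for (realizable) simple arrangements that I would invoke. Concretely such a chamber is encoded by a triangle $\triangle p_{j_1}p_{j_2}p_{j_3}$ of $S$ together with a pattern $\eta\in\{+,-\}^3$ for which every other point has the same barycentric sign pattern up to global negation; the three vertices $p_{j_1},p_{j_2},p_{j_3}$ are then precisely the non-movable points. The case $\conv(S)=\triangle ABC$ is transparent and serves as a sanity check: taking $P_2=\{A,B,C\}$ and $P_1=S\setminus\{A,B,C\}$, the $n-3$ interior points are movable (moving one keeps a common point of the hulls) while $A,B,C$ are not (removing a vertex separates the hulls), so $\deg=n-3$. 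The arrangement statement is what upgrades this to an arbitrary general-position $S$. Finally, since Theorem \ref{degree} already gives $\deg(v)\ge n-3$ for every vertex, the simplicial chamber attains the minimum, completing the proof.
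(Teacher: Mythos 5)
Your proposal is correct, but it follows a genuinely different route from the paper's. The paper argues directly in the plane: it takes a vertex $w$ of $\conv(S)$ and a line through two hull vertices $v_1,v_2$ separating $w$, then (after a rotating-line adjustment when the triangle $wv_1v_2$ contains points of $S$) writes down an explicit Radon partition and checks by hand that exactly three points ($w$ or $s_1$, together with $v_1,v_2$) are essential, giving degree $n-3$. You instead dualize: in the $(n-3)$-dimensional space $W$ of affine dependences, Radon partitions with strictly signed certificates are chambers of the central coordinate-hyperplane arrangement, and for a \emph{simplicial} chamber the movable points are exactly the facet hyperplanes --- your treatment of the delicate direction is sound, since a non-facet inequality is implied by the facet inequalities (so the certificate of a flipped partition must lie in $\bar C$), while simplicity of the arrangement forces $\alpha_j$ to be nonvanishing on $\bar C\setminus\{0\}$; and your linear-algebra translation of general position into simplicity and essentiality is also correct. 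The crux is then the existence of a simplicial chamber, which you rightly flag and rightly restrict to \emph{realizable} arrangements: this is Shannon's theorem (1979) that every essential arrangement of $n$ hyperplanes in real projective space has at least $n$ simplicial cells; the realizability caveat genuinely matters, since for pseudo-arrangements (oriented matroids) the existence of simplicial topes is Las Vergnas' open conjecture. The trade-off between the two proofs: the paper's construction is elementary and self-contained but planar and case-heavy; yours outsources the key existence statement to a nontrivial external theorem, but is conceptually cleaner and dimension-free --- run verbatim in $\rr^d$ it produces, for any $n\ge d+3$ points in general position, a Radon partition of degree exactly $n-d-1=\big(n+1-\tv(d,2)\big)(2-1)$, i.e.\ it establishes the $r=2$ case of Conjecture \ref{minimum degree} in every dimension, and under general position alone rather than strong general position.
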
\label{minimum radon}

\begin{proof}
Let $S$ be a set of $n > 4$ points in general position in the plane.  Given a partition $P=P_1,P_2$ of $S$ and a point $q \in P_i$, we will say that $q$ is \emph{essential} for the partition if by moving $q$ to $P_{i+1}$ (where $i+1$ considered modulo 2), we obtain a partition that is not a Radon partition. 

We would like to generate a Radon partition $P= P_1, P_2$ with exactly $3$ essential points.  This would imply $\deg (P)= n- 3$.  
Note that since $S$ is a set of points in general position, the vertices of the convex hull $\conv(S)$ generate a convex $k$ polygon.
Furthermore, for every vertex $w$ on $\conv(S)$ there exists a line $l$ generated by two vertices of $\conv(S)$, say $l=\langle v_1,v_2\rangle$,  that separates $w$ from the rest of the vertices of  $\conv(S)$. 
If all points of $S$ are in convex position, or the convex hull of $\conv(w, v_1,v_2)$ has no points of $S$ in its interior, we may set $P_1=\{v_1,v_2\}$ and $P_2 = S\setminus P_1$. Note that $w, v_1,v_2$ are the only essential points for this partition. 
Assume that not all the points are in convex position and that the convex hull of $\{w, v_1,v_2\}$ contains points of $S$ in its interior. Without loss of generality, assume that the convex hull of $w,x_1,x_2$ contains the least number of points of $S$ in its interior. 
Then, with center at $v_1$, rotate $l$ towards $w$ until the first vertex $s_{1}$ of $S$ in the interior of $\conv(S)$ is found. Now with center at $v_2$, rotate $l$ towards $w$ and begin labeling the points in the interior of $\conv(S)$ one by one as $s_{1,1},s_{1,2},\dots s_{1,r}$, 
$0\leq r$, where the last one $s_{1,r}$ satisfies that the line $\Bar{l}$ through $v_2,s_{1,r}$ is the closest to $s_{1}$ but leaves $s_{1}$ and $w$ on one side and the rest of the points 
$v_1,s_{1,1},s_{1,2},\dots s_{1,r-1}$ on the other side. 
Let us partition $S$ as follows: $s_{1,r},v_1,v_2$ and all points are at the same side of $w$ with respect to $\Bar{l}$ except for $s_{1}$ in partition $P_1$ and $s_{1}$ and all the rest of the points are in partition $P_2$.  Observe that $s_{1},v_1,v_2$ are three points that are essential. %
\end{proof}

As we mentioned earlier, we analyzed all the Tverberg $3$-partition graphs induced by at least one representative of each order type up to 10 points in the plane, and more than 2000 graphs of Tverberg $4$-partitions from different order types chosen randomly and uniformly.  Interestingly enough, all graphs like the one shown in Figure \ref{Tv graph},
where all points are the vertices of a regular octagon, satisfy that there is always at least one partition with exactly $(n-Tv(2,r))(r-1)$ neighbors in the Tverberg partition graph, even the vertices of regular hexagons and 10-gons. The following proposition shows that this is always true when the set of points is in strong convex position.

\begin{figure}[H]\label{octagon}
\qquad
\begin{minipage}[c]{0.54\textwidth}%

 \includegraphics[scale=.30]{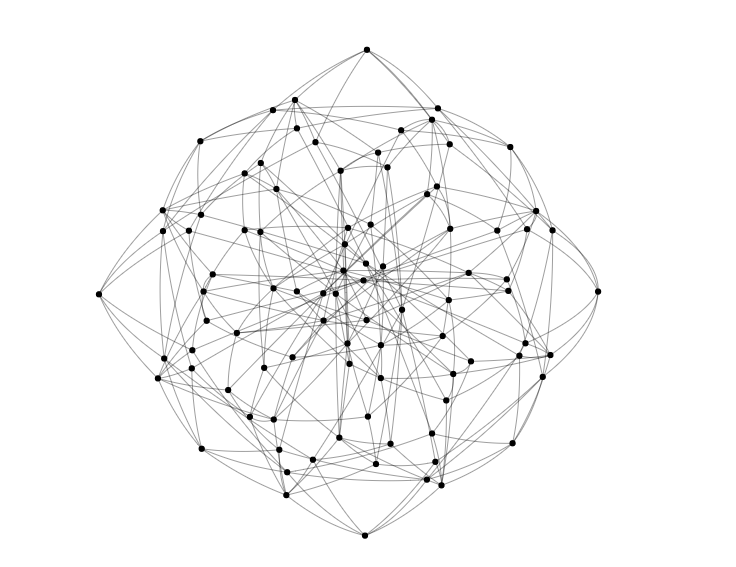}   
\end{minipage}
\centering
\parbox{0.4\textwidth}{
\begin{footnotesize}
\begin{tabular}{cc}
\multicolumn{2}{c}{$|V(G_T[S,3])| =90$}                                                          \\
\multicolumn{2}{c}{$|E(G_T[S,3])|= 272$}                                                         \\
\multicolumn{2}{c}{$\omega(G_T[S,3])= 3$}                                                        \\
\multicolumn{2}{c}{Diameter $= 5$}                                                        \\                                                        \\
\hline
\multicolumn{1}{|c|}{Degree} & \multicolumn{1}{c|}{N° Vertices} \\ \hline
\multicolumn{1}{|c|}{5}                & \multicolumn{1}{c|}{48}                          \\ \hline
\multicolumn{1}{|c|}{6}                & \multicolumn{1}{c|}{16}                          \\ \hline
\multicolumn{1}{|c|}{8}                & \multicolumn{1}{c|}{26}                          \\ \hline
\end{tabular}
\end{footnotesize}
\caption{Tverberg $3$-partition graph $G_T[S,3]=(V,E)$ where $S$ is the set of vertices of a regular octagon. Note that the minimum degree of this graph is $5$, instead of $4$ as suggested in Theorem \ref{degree}. 
\label{Tv graph}
}}
\end{figure}

For the next theorem, we will consider points in strong general convex position, which in the plane simply means that the points are in convex position and there are no three pairs of disjoint points such that the edges induced by these pairs all intersect at a point. (For further details about the definition of strong general position, see \cite{perles2014strong}).

\begin{proposition}\label{convex cases}
Given a set $S$ with $n\geq Tv(2,r)$ points in strong general convex position in the plane, there exists a vertex $v\in V(G_T[S,r])$, and therefore an $r$-partition $v=P$, such that $\deg (v)= \Big(n+1 - \tv(2,r) \Big)(r-1)$ in the Tverberg $r$-partition graph $G_T[S,r]$. 
\end{proposition}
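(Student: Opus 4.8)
The plan is to produce a single explicit Tverberg $r$-partition whose degree equals the lower bound of Theorem \ref{degree}, so that the whole task becomes a careful count of \emph{movable} points. Recall that a point is movable when it can be reassigned to at least one other part keeping the partition Tverberg, and that each point contributes at most $r-1$ edges. Since $\tv(2,r)=3r-2$, it is enough to build a partition with exactly $n+1-\tv(2,r)=n-3(r-1)$ points that are fully movable (reassignable to every one of the other $r-1$ parts) and exactly $3(r-1)$ points that are \emph{essential}, meaning no reassignment preserves the Tverberg property; such a partition has degree $(n-3(r-1))(r-1)=(n+1-\tv(2,r))(r-1)$. By Theorem \ref{Por} the partition I build will carry a core $S'\subset S$ of exactly $\tv(2,r)$ points forming a tight Tverberg partition with a single common point $p$ in the relative interior of each core part; every point of $S\setminus S'$ is then automatically fully movable, so the entire difficulty is concentrated in arranging that, among the $\tv(2,r)$ core points, precisely one is movable and the other $3(r-1)=\tv(2,r)-1$ are essential.

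For the construction I would generalize the separating-line argument of Proposition \ref{degreeradonplane}. Listing the points in their cyclic order $v_1,\dots,v_n$ around $\conv(S)$, I would form the core from two crossing chords (two segment-parts meeting at $p$) together with $r-2$ thin triangles surrounding $p$, and then place all remaining points on the ``far side'' of one distinguished part. Essentiality is forced by \emph{separation} rather than by the fragile condition $p\notin\conv(P_i\setminus\{x\})$: a segment-part $\{v_i,v_j\}$ makes both endpoints essential, because deleting one leaves a singleton whose point, being a vertex of $\conv(S)$, cannot lie in the hull of any other part; and an extreme vertex cut off from the rest of its part by the line through two core points is essential, because reassigning it to that part leaves the two new hulls strictly separated by this line, an obstruction the far-lying extra points cannot repair. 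The single movable core point is then exactly the one triangle vertex for which the segment argument in the proof of Theorem \ref{degree} (via Theorem \ref{caratheodory}) produces an \emph{alternative} common intersection point after reassignment.

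The verification has two halves. Showing that the $n-3(r-1)$ intended points are fully movable re-runs the mechanism of Theorem \ref{degree}: any point whose part still captures $p$ (or a nearby alternative intersection) after its removal can be sent to any other part, since enlarging a part never destroys containment of the common point. The harder half, and the main obstacle, is proving that each of the $3(r-1)$ essential points is stuck for \emph{all} $r-1$ candidate destinations simultaneously, and that adjoining the extra points does not accidentally create a new alternative intersection that un-sticks them; this is precisely where strong general convex position is indispensable, since it forbids the concurrences of chords that would otherwise supply such alternative intersection points. The regular octagon in Figure \ref{Tv graph}, whose main diagonals are concurrent and whose minimum degree is $5$ rather than the value $4$ predicted here, shows that this hypothesis cannot be dropped and must be used quantitatively to pin the count down to exactly $3(r-1)$.
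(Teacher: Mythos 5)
Your overall strategy coincides with the paper's: take a core $S'$ of $\tv(2,r)$ points carrying a tight Tverberg partition (necessarily two $2$-element parts and $r-2$ triangles, by strong general position and pigeonhole), observe that every point of $S\setminus S'$ is then automatically movable to all $r-1$ other parts, and arrange matters so that exactly one core point is movable and the remaining $3(r-1)$ core points are essential. Your counting is right, and your observation that the endpoints of a $2$-element part are essential (a singleton part cannot work, since every point of $S$ is a vertex of $\conv(S)$) is correct \emph{as long as no extra points are assigned to those parts}. But the proof has a genuine gap exactly where you say it does: you never prove that, after the $n-\tv(2,r)$ extra points are assigned, every triangle vertex except one is stuck for all $r-1$ destinations. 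This is not a technicality. Once extra points are adjoined to some part, the intersection of the convex hulls of the other $r-1$ parts is generically a two-dimensional region rather than the single point $p$, and a segment $\conv(A\setminus\{a_i\})$ can easily cross that region, un-sticking $a_i$. Your ``separation'' heuristic is not an argument, and your prescription to dump all extra points into one distinguished part, non-adaptively, is exactly what can fail: whether a given assignment leaves only one core point movable depends on where the extra points sit relative to the core.

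The paper's proof is precisely the resolution of this obstacle, and it is adaptive in a way your plan is not. It takes the core to be $\tv(2,r)$ \emph{consecutive} points in the cyclic order (so the extras occupy a single arc), fixes one triangle $A$ and the two $2$-element parts $R,V$, and splits into the five combinatorially distinct cyclic arrangements of $A,R,V$. In each case it constructs auxiliary points $x,y,z$ on the circumscribing curve (intersections of lines through core points and through the Tverberg point $t$) and assigns the extra points to $A$, $R$, or $V$ \emph{depending on which arcs they occupy}; in particular, in some cases the extras go into a $2$-element part, and the unique non-essential point is then $r_1$ or $v_2$ rather than a triangle vertex, contradicting your assumption that the movable point can always be taken to be a triangle vertex with extras placed elsewhere. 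Without this case analysis (or some equivalent mechanism certifying essentiality of all but one core point for \emph{every} destination), your proposal establishes only the lower bound on the degree, which is already Theorem \ref{degree}, and not the exact value claimed in the proposition.
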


\begin{proof}
Let $S=\{s_1,s_2,\dots s_n\}$  be a set of points in strong general position; assume they are arranged in cyclic clockwise order on some closed curve $C$. 
We will construct a specific partition $P$ such that $\deg(P)=(n-\tv(2,r)+1)(r-1)$. 
Consider the subset $S^\prime=\{s_1,s_2,\dots ,s_{Tv(2,r)}\}$ and a Tverberg $r$-partition $P'$ of $S'$. Since the points are in strongly general position, there is only one point $t$ at the intersection of the convex hulls of the parts, and every part contains at least two points. Then the pigeonhole principle yields that among the $r$ parts there are two of them with exactly two points, say $R=\{r_1,r_2\}$ and $V=\{v_1,v_2\}$, whereas the remaining parts, say, $P_{1},P_{2},\dots,P_{r-2}$, have exactly three points. Let $A=P_{r-2}=\{a_1,a_2,a_3\}$ be one of these parts with three elements chosen arbitrarily. Note that there are exactly five combinatorially different ways of arranging the sets $A,V$ and $R$: $1)$ $r_1a_1v_1a_2r_2a_3v_2$, $2)$ $a_1r_1v_1a_2r_2v_2a_3$, $3)$ $a_1v_1r_1a_2a_3v_2r_2$, $4)$ $a_1v_ 1r_1a_2v_2a_3r_2$, and $5)$ $v_1a_1r_1v_2a_2a_3r_2$ (taken in the clockwise direction).
We will say that a point of a Tverberg $r$-partition of a set is essential if when it is moved to another part, the resulting partition is no longer a Tverberg partition. 

We show that there exists a precise partition $P= P_1, P_2,\dots,P_{r-3}, A,R,V $ of $S$ where all points  in $S\setminus S'$ will be assigned  to $A, R$ or $V$ in such a way that $P$ has the desired degree.    
Note that every point in $P_i\notin \{A,V,R\}$ is essential because $t$ is contained in the interior of its convex hull.\\

\noindent\emph{Case 1:}
Consider the line passing through $v_1$ and $\overline{a_1a_3} \cap \overline{r_1r_2}$ that intersects the circumference $C$ at
a point, say $x$, the line passing through $r_2$ and $\overline{a_1a_3}\cap \overline{v_1 v_2}$ intersects $C$ at point $y$, and the line through $a_2$ and $t$ intersects $C$ at point $z$.  The points $x,y$ and $z$ appear after the points of $S'$ (in cyclic order). Regardless of the order of points $x,y$ and $z$, we notice the following: if there are points of $S\setminus S'$ on arc $\widehat{yr_1}$, we assign each point of $S\setminus S'$ to the part $R$, and observe that with the exception of $r_1$, every point in $S'$ is an essential point. Then $r_1$ may be moved to any of the remaining $r-1$ parts. Moreover, every point in $S\setminus S'$ is not essential either and thus can be assigned to any of the remaining $r-1$ parts as well. Then $P$ satisfies that $\deg(P)=(n-Tv(2,r)+1)(r-1)$. \\

\begin{figure}[h!]
\centering
\input{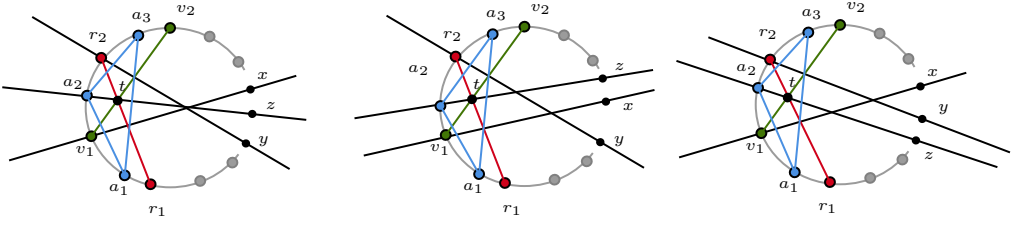}
\caption{All the combinatorical possibilities if there are no points of $S\setminus S'$ on the arc $\widehat{yr_1}$ or on $\widehat{v_2x}$. }\label{conv cases}
\end{figure}

Then we may assume that there are no points of  $S\setminus S'$ on arc $\widehat{yr_1}$. Similarly, if there are points of  $S\setminus S'$ on arc $\widehat{v_2x}$ regardless of the position of $x,y$ and $z$, we may assign every point in $S\setminus S'$ to part $V$, and then $v_2$ becomes the only non-essential point of $P'$ and with a similar argument as before, the desired degree is obtained.\\ 
If there are no points of $S\setminus S'$ on the arc $\widehat{yr_1}$ or on $\widehat{v_2x}$, the points $x$ and $y$ appear in the order $xy$, and every point of $S\setminus S'$ is contained in the arc $\widehat{xy}$. Thus, the only possible orders for the points $x,y$ and $z$ are $xzy$, $zxy$, and $xyz$.\\
If the points appear in the order $xzy$, then, if there are points $S\setminus S'$ on arc $\widehat{xz}$, assigning them to $A$ and the remaining ones (if any) to $R$, we have that $a_3$ is the only non-essential point for $P'$. On the other hand, if all the points of $S\setminus S^\prime$ lie in $\widehat{zy}$, by assigning them to $A$ we have that $a_1$ is the only non-essential point for $P'$. \\ 
If the points appear in the order $zxy$, we assign the points of $S\setminus S'$ to $A$. Thus, the only non-essential point of $P'$ is $a_1$.\\
Similarly, if we have the orientation $xyz$, we assign the points of $S\setminus S'$ to $A$. So $a_3$ is the only non-essential point of $P'$.\\

\noindent\emph{Case 2:}
The line through $v_1$ and $\overline{a_1 a_2} \cap \overline{r_1 r_2}$ intersects $C$ at point $x$, the line through $r_2$ and $\overline{a_2 a_3} \cap \overline{v_1 v_2}$ intersects $C$ at point $y$, and the line through $a_2$ and $t$ intersects $C$ at $z$.\\
If there are points of $S\setminus S'$ on arc $\widehat{a_3x}$, then by the assignment of all points of $S\setminus S'$ to $V$, point $v_2$ becomes the only non-essential point of $P'$.\\
If there are no points of $S\setminus S'$ on arc $\widehat{a_3x}$, but there are on arc $\widehat{ya_1}$, then by the assignment of all points of $S\setminus S'$ to $R$, point $r_1$ becomes the only non-essential point of $P'$.\\
If there are no points of $S\setminus S'$ on the arc $\widehat{a_3x}$ or on $\widehat{ya_1}$, the points $x$ and $y$ appear in the order $xy$, and every point of $S\setminus S'$ is contained in the arc $\widehat{xy}$. Thus, the only possible orders for the points $x,y$ and $z$ are $xzy$, $zxy$ and $xyz$. \\
If we have the order $xzy$ and there are points of $S\setminus S'$ on arc $\widehat{xz}$, assigning those points to $A$ and the remaining ones (if any) to $R$, we have that $a_3$ is the only non-essential point for $P'$. On the other hand, if all the points of $S\setminus S'$ are on arc $\widehat{zy}$ when we assign them to $A$ we have that $a_1$ is the only non-essential point for $P'$. \\
If the orientation is $zxy$, we assign the points of $S\setminus S'$ to $A$. Thus, the only non-essential point of $P'$ is $a_1$. \\
Similarly, if we have the orientation $xyz$, we assign the points of $S\setminus S'$ to $A$. So $a_3$ is the only non-essential point of $P'$.\\

\noindent\emph{Case 3:} 
The line passing through $r_1$ and $\overline{a_1 a_2} \cap \overline{v_1 v_2}$ intersects $C$ at the point $x$, while the line passing through $a_2$ and $t$ cuts $C$ at the point $y$.\\
If there are points of $S\setminus S'$ on arc $\widehat{r_2x}$, then if all the points of $S\setminus S'$ are assigned to $R$, the only non-essential point of $P'$ is $r_2$.\\
The points $x$ and $y$ appear after the points of $S'$ and can be ordered in two different ways, $xy$ or $yx$.
If the order is $xy$ and there are no points of $S\setminus S'$ on arc $\widehat{r_2x}$, but there are on arc $\widehat{ya_1}$, we assign these points to $A$, and the remaining points (if any) to $V$. Thus, the only non-essential point of $P'$ is $a_1$.\\
On the other hand, if the order is $yx$, every point of $S\setminus S'$ is necessarily contained in $\widehat{xa_1}$. Then, assigning all these points to $A$, we have that $a_1$ becomes the only non-essential point of $P'$.\\

\noindent\emph{Case 4:} 
The line passing through $r_1$ and $\overline{v_1v_2}\cap \overline{a_1a_2}$ intersects $C$ at the point $x$, while the line passing through $a_2$ and $t$ intersects $C$ at the point $y$.\\
If there are points of $S\setminus S'$ on the arc $\widehat{r_2x}$, we assign every point of $S\setminus S'$ to $R$. Then $r_2$ is the only non-essential point of $P'$.\\
Suppose then that there are no points of $S\setminus S'$ on $\widehat{r_2x}$. The points $x$ and $y$ appear after the points of $S'$ and can be ordered in two different ways, $xy$ or $yx$.\\
If the order is $xy$ and there are points from $S\setminus S'$ on arc $\widehat{ya_1}$, assigning these points to the part $B$ and the points (if any) contained on arc $\widehat{xy}$ to $R$, we have that $a_1$ becomes the only non-essential point of $P'$.\\
On the other hand, if the order is $yx$, all points in $S\setminus S'$ are contained on arc $\widehat{xa_1}$. Then, assigning all the points to the part $B$, we have that $a_1$ becomes the only non-essential point of $P'$.\\

\noindent\emph{Case 5:} 
Assigning the points of $S\setminus S'$  to the part $A$, we have that $a_3$ is the only non-essential point of $P'$.
\end{proof}

Based on these two propositions and the computational experiments, we believe that the lower bound on Theorem \ref{degree} can always be achieved if $S$ is in strict general position. That is, there always exists a Tverberg partition $P$ such that $P$ is at distance exactly one from another  $\Big(n+1 - \tv(d,r) \Big)(r-1)$ Tverberg partition.

\section{Remarks and open problems}\label{sec:remarks}

In this paper, we have defined and studied the structural properties of the Tverberg partition graph of a given set $S \subset \rr^d$, even when $|S| > \tv(d,r)$.   We study the maximum and minimum degree, the diameter, the clique number, and the connectivity of this graph, and we show that the behavior of the minimum degree for the Tverberg partition graph changes significantly when $|S|$ is large enough. 

Tverberg's theorem guarantees that a set with sufficiently many points $|S|\geq \tv(d,r)$ in
$\rr^d$ can always be partitioned into $r$ parts such that the $(r-1)$-simplex is the nerve complex, describing the intersection pattern of the convex hulls of the parts. In \cite{TverbergTypeTheorems}, \cite{TonoDeb} and \cite{florianyAmsi}, the authors explore 
how other simplicial complexes other than the simplex have ``a Tverberg number" $\tv(d,\mathcal{K})$ for some given simplicial complex $\mathcal{K}$ as well. That is, once a set $S$ has sufficiently many points ($|S|\geq \tv(d,\mathcal{K}))$, there always exists a partition such that its nerve complex is equal to $\mathcal{K}$.  

It becomes very interesting, then, to analyze other Tverberg type graphs where other simplicial complexes are considered, and study their basic structural properties too. For instance, for large $S$, almost all partitions of $S$ into $r$ parts are Tverberg partitions \cite{Soberon:2018gn}. When other simplicial complexes are considered, what can we say about the probability of obtaining other Tverberg graphs of simplicial complexes such as trees or cycles?

Among other interesting open questions we highlight Sierksma's conjecture.  Determining the number of vertices of the Tverberg graph when $|S|\geq (d+1)(r-1)+1$ is an engaging problem. After analyzing thousands of these graphs for sets in the plane, we find it likely that the graphs induced by sets of points that minimize the rectilinear crossing number generate the fewest number of Tverberg partitions when compared with every other order type. In contrast, Tverberg graphs induced by the sets of points that are vertices of a regular polygon generate the largest number of vertices in comparison with other order types (see the right-hand column in the tables below).

The following two tables give the number of vertices of Tverberg partition graphs of all possible simplicial complexes that occur on three vertices. The first table deals with sets of $n$ points $7\leq n\leq 13$ that minimize the number of rectilinear crossings, and the second is for those that are vertices of regular polygons.



      
      
      

\begin{table}[H]\label{helices}
\begin{tabular}{|cccccc|}
\hline
\multicolumn{6}{|c|}{\textbf{\begin{tabular}[c]{@{}c@{}}3-Partitions of sets of \boldmath{$n$} points\\ that minimize the rectilinear crossing number\end{tabular}}} \\ \hline
\multicolumn{1}{|c|}{\boldmath{$n$}}  & \multicolumn{1}{c|}{ \tikzset{every picture/.style={line width=0.75pt}} 

\begin{tikzpicture}[x=0.35pt,y=0.35pt,yscale=-1,xscale=1]

\draw  [color={rgb, 255:red, 0; green, 0; blue, 0 }  ,draw opacity=1 ][fill={rgb, 255:red, 0; green, 0; blue, 0 }  ,fill opacity=1 ][line width=0.75]  (221.24,84.49) .. controls (221.23,82.84) and (222.54,81.49) .. (224.17,81.49) .. controls (225.79,81.48) and (227.11,82.82) .. (227.11,84.48) .. controls (227.12,86.13) and (225.81,87.48) .. (224.18,87.49) .. controls (222.56,87.49) and (221.24,86.15) .. (221.24,84.49) -- cycle ;
\draw  [color={rgb, 255:red, 0; green, 0; blue, 0 }  ,draw opacity=1 ][fill={rgb, 255:red, 0; green, 0; blue, 0 }  ,fill opacity=1 ][line width=0.75]  (256.24,25.67) .. controls (256.23,24.02) and (257.54,22.67) .. (259.17,22.67) .. controls (260.79,22.66) and (262.11,24) .. (262.11,25.66) .. controls (262.12,27.31) and (260.81,28.66) .. (259.18,28.67) .. controls (257.56,28.67) and (256.24,27.33) .. (256.24,25.67) -- cycle ;
\draw  [color={rgb, 255:red, 0; green, 0; blue, 0 }  ,draw opacity=1 ][fill={rgb, 255:red, 0; green, 0; blue, 0 }  ,fill opacity=1 ][line width=0.75]  (291.24,84.49) .. controls (291.23,82.84) and (292.54,81.49) .. (294.17,81.49) .. controls (295.79,81.48) and (297.11,82.82) .. (297.11,84.48) .. controls (297.12,86.13) and (295.81,87.48) .. (294.18,87.49) .. controls (292.56,87.49) and (291.24,86.15) .. (291.24,84.49) -- cycle ;
\draw  [color={rgb, 255:red, 0; green, 0; blue, 0 }  ,draw opacity=0 ] (215.74,15.89) -- (302.24,15.89) -- (302.24,95.64) -- (215.74,95.64) -- cycle ;

\end{tikzpicture}}     & \multicolumn{1}{c|}{ \tikzset{every picture/.style={line width=0.75pt}} 

\begin{tikzpicture}[x=0.35pt,y=0.35pt,yscale=-1,xscale=1]

\draw  [color={rgb, 255:red, 0; green, 0; blue, 0 }  ,draw opacity=1 ][fill={rgb, 255:red, 0; green, 0; blue, 0 }  ,fill opacity=1 ][line width=0.75]  (222.67,78.71) .. controls (222.66,77.05) and (223.97,75.71) .. (225.6,75.7) .. controls (227.22,75.7) and (228.54,77.04) .. (228.54,78.69) .. controls (228.55,80.35) and (227.23,81.7) .. (225.61,81.7) .. controls (223.99,81.7) and (222.67,80.37) .. (222.67,78.71) -- cycle ;
\draw  [color={rgb, 255:red, 0; green, 0; blue, 0 }  ,draw opacity=1 ][fill={rgb, 255:red, 0; green, 0; blue, 0 }  ,fill opacity=1 ][line width=0.75]  (257.67,19.89) .. controls (257.66,18.23) and (258.97,16.89) .. (260.6,16.88) .. controls (262.22,16.88) and (263.54,18.22) .. (263.54,19.87) .. controls (263.55,21.53) and (262.23,22.88) .. (260.61,22.88) .. controls (258.99,22.88) and (257.67,21.55) .. (257.67,19.89) -- cycle ;
\draw  [color={rgb, 255:red, 0; green, 0; blue, 0 }  ,draw opacity=1 ][fill={rgb, 255:red, 0; green, 0; blue, 0 }  ,fill opacity=1 ][line width=0.75]  (292.67,78.71) .. controls (292.66,77.05) and (293.97,75.71) .. (295.6,75.7) .. controls (297.22,75.7) and (298.54,77.04) .. (298.54,78.69) .. controls (298.55,80.35) and (297.23,81.7) .. (295.61,81.7) .. controls (293.99,81.7) and (292.67,80.37) .. (292.67,78.71) -- cycle ;
\draw    (260.6,19.88) -- (225.6,78.7) ;

\end{tikzpicture}}      & \multicolumn{1}{c|}{\tikzset{every picture/.style={line width=0.75pt}} 

\begin{tikzpicture}[x=0.35pt,y=0.35pt,yscale=-1,xscale=1]

\draw  [color={rgb, 255:red, 0; green, 0; blue, 0 }  ,draw opacity=1 ][fill={rgb, 255:red, 0; green, 0; blue, 0 }  ,fill opacity=1 ][line width=0.75]  (221.17,82.21) .. controls (221.16,80.55) and (222.47,79.21) .. (224.1,79.2) .. controls (225.72,79.2) and (227.04,80.54) .. (227.04,82.19) .. controls (227.05,83.85) and (225.73,85.2) .. (224.11,85.2) .. controls (222.49,85.2) and (221.17,83.87) .. (221.17,82.21) -- cycle ;
\draw  [color={rgb, 255:red, 0; green, 0; blue, 0 }  ,draw opacity=1 ][fill={rgb, 255:red, 0; green, 0; blue, 0 }  ,fill opacity=1 ][line width=0.75]  (256.17,23.39) .. controls (256.16,21.73) and (257.47,20.39) .. (259.1,20.38) .. controls (260.72,20.38) and (262.04,21.72) .. (262.04,23.37) .. controls (262.05,25.03) and (260.73,26.38) .. (259.11,26.38) .. controls (257.49,26.38) and (256.17,25.05) .. (256.17,23.39) -- cycle ;
\draw  [color={rgb, 255:red, 0; green, 0; blue, 0 }  ,draw opacity=1 ][fill={rgb, 255:red, 0; green, 0; blue, 0 }  ,fill opacity=1 ][line width=0.75]  (291.17,82.21) .. controls (291.16,80.55) and (292.47,79.21) .. (294.1,79.2) .. controls (295.72,79.2) and (297.04,80.54) .. (297.04,82.19) .. controls (297.05,83.85) and (295.73,85.2) .. (294.11,85.2) .. controls (292.49,85.2) and (291.17,83.87) .. (291.17,82.21) -- cycle ;
\draw    (259.1,23.38) -- (224.1,82.2) ;
\draw    (259.1,23.38) -- (294.1,82.2) ;

\end{tikzpicture}}      & \multicolumn{1}{c|}{\tikzset{every picture/.style={line width=0.75pt}} 

\begin{tikzpicture}[x=0.35pt,y=0.35pt,yscale=-1,xscale=1]

\draw  [color={rgb, 255:red, 0; green, 0; blue, 0 }  ,draw opacity=1 ][fill={rgb, 255:red, 0; green, 0; blue, 0 }  ,fill opacity=1 ][line width=0.75]  (222.67,78.71) .. controls (222.66,77.05) and (223.97,75.71) .. (225.6,75.7) .. controls (227.22,75.7) and (228.54,77.04) .. (228.54,78.69) .. controls (228.55,80.35) and (227.23,81.7) .. (225.61,81.7) .. controls (223.99,81.7) and (222.67,80.37) .. (222.67,78.71) -- cycle ;
\draw  [color={rgb, 255:red, 0; green, 0; blue, 0 }  ,draw opacity=1 ][fill={rgb, 255:red, 0; green, 0; blue, 0 }  ,fill opacity=1 ][line width=0.75]  (257.67,19.89) .. controls (257.66,18.23) and (258.97,16.89) .. (260.6,16.88) .. controls (262.22,16.88) and (263.54,18.22) .. (263.54,19.87) .. controls (263.55,21.53) and (262.23,22.88) .. (260.61,22.88) .. controls (258.99,22.88) and (257.67,21.55) .. (257.67,19.89) -- cycle ;
\draw  [color={rgb, 255:red, 0; green, 0; blue, 0 }  ,draw opacity=1 ][fill={rgb, 255:red, 0; green, 0; blue, 0 }  ,fill opacity=1 ][line width=0.75]  (292.67,78.71) .. controls (292.66,77.05) and (293.97,75.71) .. (295.6,75.7) .. controls (297.22,75.7) and (298.54,77.04) .. (298.54,78.69) .. controls (298.55,80.35) and (297.23,81.7) .. (295.61,81.7) .. controls (293.99,81.7) and (292.67,80.37) .. (292.67,78.71) -- cycle ;
\draw    (260.6,19.88) -- (225.6,78.7) ;
\draw    (260.6,19.88) -- (295.6,78.7) ;
\draw    (295.6,78.7) -- (225.6,78.7) ;

\end{tikzpicture}}     &      \multicolumn{1}{c|}{\tikzset{every picture/.style={line width=0.75pt}} 

\begin{tikzpicture}[x=0.35pt,y=0.35pt,yscale=-1,xscale=1]

\draw  [color={rgb, 255:red, 0; green, 0; blue, 0 }  ,draw opacity=1 ][fill={rgb, 255:red, 0; green, 0; blue, 0 }  ,fill opacity=1 ][line width=0.75]  (222.24,86.49) .. controls (222.23,84.84) and (223.54,83.49) .. (225.17,83.49) .. controls (226.79,83.48) and (228.11,84.82) .. (228.11,86.48) .. controls (228.12,88.13) and (226.81,89.48) .. (225.18,89.49) .. controls (223.56,89.49) and (222.24,88.15) .. (222.24,86.49) -- cycle ;
\draw  [color={rgb, 255:red, 0; green, 0; blue, 0 }  ,draw opacity=1 ][fill={rgb, 255:red, 0; green, 0; blue, 0 }  ,fill opacity=1 ][line width=0.75]  (257.24,27.67) .. controls (257.23,26.02) and (258.54,24.67) .. (260.17,24.67) .. controls (261.79,24.66) and (263.11,26) .. (263.11,27.66) .. controls (263.12,29.31) and (261.81,30.66) .. (260.18,30.67) .. controls (258.56,30.67) and (257.24,29.33) .. (257.24,27.67) -- cycle ;
\draw  [color={rgb, 255:red, 0; green, 0; blue, 0 }  ,draw opacity=1 ][fill={rgb, 255:red, 0; green, 0; blue, 0 }  ,fill opacity=1 ][line width=0.75]  (292.24,86.49) .. controls (292.23,84.84) and (293.54,83.49) .. (295.17,83.49) .. controls (296.79,83.48) and (298.11,84.82) .. (298.11,86.48) .. controls (298.12,88.13) and (296.81,89.48) .. (295.18,89.49) .. controls (293.56,89.49) and (292.24,88.15) .. (292.24,86.49) -- cycle ;
\draw  [fill={rgb, 255:red, 0; green, 0; blue, 0 }  ,fill opacity=0.59 ] (260.18,27.67) -- (295.18,86.49) -- (225.18,86.49) -- cycle ;

\end{tikzpicture}}  \\ \hline
\multicolumn{1}{|c|}{\textbf{7}}  & \multicolumn{1}{c|}{127}  & \multicolumn{1}{c|}{100}   & \multicolumn{1}{c|}{70}    & \multicolumn{1}{c|}{0}    & \multicolumn{1}{c|}{4}       \\ \hline
\multicolumn{1}{|c|}{\textbf{8}}  & \multicolumn{1}{c|}{253}  & \multicolumn{1}{c|}{344}   & \multicolumn{1}{c|}{317}   & \multicolumn{1}{c|}{4}    & \multicolumn{1}{c|}{48}     \\ \hline
\multicolumn{1}{|c|}{\textbf{9}}  & \multicolumn{1}{c|}{466}  & \multicolumn{1}{c|}{977}   & \multicolumn{1}{c|}{1220}  & \multicolumn{1}{c|}{26}   & \multicolumn{1}{c|}{336}    \\ \hline
\multicolumn{1}{|c|}{\textbf{10}} & \multicolumn{1}{c|}{800}  & \multicolumn{1}{c|}{2515}  & \multicolumn{1}{c|}{4045}  & \multicolumn{1}{c|}{170}  & \multicolumn{1}{c|}{1800}   \\ \hline
\multicolumn{1}{|c|}{\textbf{11}} & \multicolumn{1}{c|}{1301} & \multicolumn{1}{c|}{5946}  & \multicolumn{1}{c|}{12310} & \multicolumn{1}{c|}{627}  & \multicolumn{1}{c|}{8317}   \\ \hline
\multicolumn{1}{|c|}{\textbf{12}} & \multicolumn{1}{c|}{2052} & \multicolumn{1}{c|}{13182} & \multicolumn{1}{c|}{34998} & \multicolumn{1}{c|}{2814} & \multicolumn{1}{c|}{33480}  \\ \hline
\multicolumn{1}{|c|}{\textbf{13}} & \multicolumn{1}{c|}{3070} & \multicolumn{1}{c|}{28265} & \multicolumn{1}{c|}{93926} & \multicolumn{1}{c|}{8766} & \multicolumn{1}{c|}{127598} \\ \hline
\end{tabular}
\end{table}

\begin{table}[H] \label{poligonos}
\begin{tabular}{|cccccc|}
\hline
\multicolumn{6}{|c|}{\textbf{\begin{tabular}[c]{@{}c@{}}3-Partitions of sets of \boldmath{$n$} points\\ that are the vertices of regular polygons\end{tabular}}} \\ \hline
\multicolumn{1}{|c|}{\boldmath{$n$}}  & \multicolumn{1}{c|}{\tikzset{every picture/.style={line width=0.75pt}} 

\begin{tikzpicture}[x=0.35pt,y=0.35pt,yscale=-1,xscale=1]

\draw  [color={rgb, 255:red, 0; green, 0; blue, 0 }  ,draw opacity=1 ][fill={rgb, 255:red, 0; green, 0; blue, 0 }  ,fill opacity=1 ][line width=0.75]  (221.24,84.49) .. controls (221.23,82.84) and (222.54,81.49) .. (224.17,81.49) .. controls (225.79,81.48) and (227.11,82.82) .. (227.11,84.48) .. controls (227.12,86.13) and (225.81,87.48) .. (224.18,87.49) .. controls (222.56,87.49) and (221.24,86.15) .. (221.24,84.49) -- cycle ;
\draw  [color={rgb, 255:red, 0; green, 0; blue, 0 }  ,draw opacity=1 ][fill={rgb, 255:red, 0; green, 0; blue, 0 }  ,fill opacity=1 ][line width=0.75]  (256.24,25.67) .. controls (256.23,24.02) and (257.54,22.67) .. (259.17,22.67) .. controls (260.79,22.66) and (262.11,24) .. (262.11,25.66) .. controls (262.12,27.31) and (260.81,28.66) .. (259.18,28.67) .. controls (257.56,28.67) and (256.24,27.33) .. (256.24,25.67) -- cycle ;
\draw  [color={rgb, 255:red, 0; green, 0; blue, 0 }  ,draw opacity=1 ][fill={rgb, 255:red, 0; green, 0; blue, 0 }  ,fill opacity=1 ][line width=0.75]  (291.24,84.49) .. controls (291.23,82.84) and (292.54,81.49) .. (294.17,81.49) .. controls (295.79,81.48) and (297.11,82.82) .. (297.11,84.48) .. controls (297.12,86.13) and (295.81,87.48) .. (294.18,87.49) .. controls (292.56,87.49) and (291.24,86.15) .. (291.24,84.49) -- cycle ;
\draw  [color={rgb, 255:red, 0; green, 0; blue, 0 }  ,draw opacity=0 ] (215.74,15.89) -- (302.24,15.89) -- (302.24,95.64) -- (215.74,95.64) -- cycle ;

\end{tikzpicture}}     & \multicolumn{1}{c|}{\tikzset{every picture/.style={line width=0.75pt}} 

\begin{tikzpicture}[x=0.35pt,y=0.35pt,yscale=-1,xscale=1]

\draw  [color={rgb, 255:red, 0; green, 0; blue, 0 }  ,draw opacity=1 ][fill={rgb, 255:red, 0; green, 0; blue, 0 }  ,fill opacity=1 ][line width=0.75]  (222.67,78.71) .. controls (222.66,77.05) and (223.97,75.71) .. (225.6,75.7) .. controls (227.22,75.7) and (228.54,77.04) .. (228.54,78.69) .. controls (228.55,80.35) and (227.23,81.7) .. (225.61,81.7) .. controls (223.99,81.7) and (222.67,80.37) .. (222.67,78.71) -- cycle ;
\draw  [color={rgb, 255:red, 0; green, 0; blue, 0 }  ,draw opacity=1 ][fill={rgb, 255:red, 0; green, 0; blue, 0 }  ,fill opacity=1 ][line width=0.75]  (257.67,19.89) .. controls (257.66,18.23) and (258.97,16.89) .. (260.6,16.88) .. controls (262.22,16.88) and (263.54,18.22) .. (263.54,19.87) .. controls (263.55,21.53) and (262.23,22.88) .. (260.61,22.88) .. controls (258.99,22.88) and (257.67,21.55) .. (257.67,19.89) -- cycle ;
\draw  [color={rgb, 255:red, 0; green, 0; blue, 0 }  ,draw opacity=1 ][fill={rgb, 255:red, 0; green, 0; blue, 0 }  ,fill opacity=1 ][line width=0.75]  (292.67,78.71) .. controls (292.66,77.05) and (293.97,75.71) .. (295.6,75.7) .. controls (297.22,75.7) and (298.54,77.04) .. (298.54,78.69) .. controls (298.55,80.35) and (297.23,81.7) .. (295.61,81.7) .. controls (293.99,81.7) and (292.67,80.37) .. (292.67,78.71) -- cycle ;
\draw    (260.6,19.88) -- (225.6,78.7) ;

\end{tikzpicture}}      & \multicolumn{1}{c|}{\tikzset{every picture/.style={line width=0.75pt}} 

\begin{tikzpicture}[x=0.35pt,y=0.35pt,yscale=-1,xscale=1]

\draw  [color={rgb, 255:red, 0; green, 0; blue, 0 }  ,draw opacity=1 ][fill={rgb, 255:red, 0; green, 0; blue, 0 }  ,fill opacity=1 ][line width=0.75]  (221.17,82.21) .. controls (221.16,80.55) and (222.47,79.21) .. (224.1,79.2) .. controls (225.72,79.2) and (227.04,80.54) .. (227.04,82.19) .. controls (227.05,83.85) and (225.73,85.2) .. (224.11,85.2) .. controls (222.49,85.2) and (221.17,83.87) .. (221.17,82.21) -- cycle ;
\draw  [color={rgb, 255:red, 0; green, 0; blue, 0 }  ,draw opacity=1 ][fill={rgb, 255:red, 0; green, 0; blue, 0 }  ,fill opacity=1 ][line width=0.75]  (256.17,23.39) .. controls (256.16,21.73) and (257.47,20.39) .. (259.1,20.38) .. controls (260.72,20.38) and (262.04,21.72) .. (262.04,23.37) .. controls (262.05,25.03) and (260.73,26.38) .. (259.11,26.38) .. controls (257.49,26.38) and (256.17,25.05) .. (256.17,23.39) -- cycle ;
\draw  [color={rgb, 255:red, 0; green, 0; blue, 0 }  ,draw opacity=1 ][fill={rgb, 255:red, 0; green, 0; blue, 0 }  ,fill opacity=1 ][line width=0.75]  (291.17,82.21) .. controls (291.16,80.55) and (292.47,79.21) .. (294.1,79.2) .. controls (295.72,79.2) and (297.04,80.54) .. (297.04,82.19) .. controls (297.05,83.85) and (295.73,85.2) .. (294.11,85.2) .. controls (292.49,85.2) and (291.17,83.87) .. (291.17,82.21) -- cycle ;
\draw    (259.1,23.38) -- (224.1,82.2) ;
\draw    (259.1,23.38) -- (294.1,82.2) ;

\end{tikzpicture}}      & \multicolumn{1}{c|}{\tikzset{every picture/.style={line width=0.75pt}} 

\begin{tikzpicture}[x=0.35pt,y=0.35pt,yscale=-1,xscale=1]

\draw  [color={rgb, 255:red, 0; green, 0; blue, 0 }  ,draw opacity=1 ][fill={rgb, 255:red, 0; green, 0; blue, 0 }  ,fill opacity=1 ][line width=0.75]  (222.67,78.71) .. controls (222.66,77.05) and (223.97,75.71) .. (225.6,75.7) .. controls (227.22,75.7) and (228.54,77.04) .. (228.54,78.69) .. controls (228.55,80.35) and (227.23,81.7) .. (225.61,81.7) .. controls (223.99,81.7) and (222.67,80.37) .. (222.67,78.71) -- cycle ;
\draw  [color={rgb, 255:red, 0; green, 0; blue, 0 }  ,draw opacity=1 ][fill={rgb, 255:red, 0; green, 0; blue, 0 }  ,fill opacity=1 ][line width=0.75]  (257.67,19.89) .. controls (257.66,18.23) and (258.97,16.89) .. (260.6,16.88) .. controls (262.22,16.88) and (263.54,18.22) .. (263.54,19.87) .. controls (263.55,21.53) and (262.23,22.88) .. (260.61,22.88) .. controls (258.99,22.88) and (257.67,21.55) .. (257.67,19.89) -- cycle ;
\draw  [color={rgb, 255:red, 0; green, 0; blue, 0 }  ,draw opacity=1 ][fill={rgb, 255:red, 0; green, 0; blue, 0 }  ,fill opacity=1 ][line width=0.75]  (292.67,78.71) .. controls (292.66,77.05) and (293.97,75.71) .. (295.6,75.7) .. controls (297.22,75.7) and (298.54,77.04) .. (298.54,78.69) .. controls (298.55,80.35) and (297.23,81.7) .. (295.61,81.7) .. controls (293.99,81.7) and (292.67,80.37) .. (292.67,78.71) -- cycle ;
\draw    (260.6,19.88) -- (225.6,78.7) ;
\draw    (260.6,19.88) -- (295.6,78.7) ;
\draw    (295.6,78.7) -- (225.6,78.7) ;

\end{tikzpicture}}     &      \multicolumn{1}{c|}{\tikzset{every picture/.style={line width=0.75pt}} 

\begin{tikzpicture}[x=0.35pt,y=0.35pt,yscale=-1,xscale=1]

\draw  [color={rgb, 255:red, 0; green, 0; blue, 0 }  ,draw opacity=1 ][fill={rgb, 255:red, 0; green, 0; blue, 0 }  ,fill opacity=1 ][line width=0.75]  (222.24,86.49) .. controls (222.23,84.84) and (223.54,83.49) .. (225.17,83.49) .. controls (226.79,83.48) and (228.11,84.82) .. (228.11,86.48) .. controls (228.12,88.13) and (226.81,89.48) .. (225.18,89.49) .. controls (223.56,89.49) and (222.24,88.15) .. (222.24,86.49) -- cycle ;
\draw  [color={rgb, 255:red, 0; green, 0; blue, 0 }  ,draw opacity=1 ][fill={rgb, 255:red, 0; green, 0; blue, 0 }  ,fill opacity=1 ][line width=0.75]  (257.24,27.67) .. controls (257.23,26.02) and (258.54,24.67) .. (260.17,24.67) .. controls (261.79,24.66) and (263.11,26) .. (263.11,27.66) .. controls (263.12,29.31) and (261.81,30.66) .. (260.18,30.67) .. controls (258.56,30.67) and (257.24,29.33) .. (257.24,27.67) -- cycle ;
\draw  [color={rgb, 255:red, 0; green, 0; blue, 0 }  ,draw opacity=1 ][fill={rgb, 255:red, 0; green, 0; blue, 0 }  ,fill opacity=1 ][line width=0.75]  (292.24,86.49) .. controls (292.23,84.84) and (293.54,83.49) .. (295.17,83.49) .. controls (296.79,83.48) and (298.11,84.82) .. (298.11,86.48) .. controls (298.12,88.13) and (296.81,89.48) .. (295.18,89.49) .. controls (293.56,89.49) and (292.24,88.15) .. (292.24,86.49) -- cycle ;
\draw  [fill={rgb, 255:red, 0; green, 0; blue, 0 }  ,fill opacity=0.59 ] (260.18,27.67) -- (295.18,86.49) -- (225.18,86.49) -- cycle ;

\end{tikzpicture}}  \\ \hline
\multicolumn{1}{|c|}{\textbf{7}}  & \multicolumn{1}{c|}{105}  & \multicolumn{1}{c|}{154}   & \multicolumn{1}{c|}{28}    & \multicolumn{1}{c|}{7}    & 7      \\ \hline
\multicolumn{1}{|c|}{\textbf{8}}  & \multicolumn{1}{c|}{196}  & \multicolumn{1}{c|}{512}   & \multicolumn{1}{c|}{152}   & \multicolumn{1}{c|}{16}   & 90     \\ \hline
\multicolumn{1}{|c|}{\textbf{9}}  & \multicolumn{1}{c|}{336}  & \multicolumn{1}{c|}{1467}  & \multicolumn{1}{c|}{630}   & \multicolumn{1}{c|}{138}  & 454    \\ \hline
\multicolumn{1}{|c|}{\textbf{10}} & \multicolumn{1}{c|}{540}  & \multicolumn{1}{c|}{3820}  & \multicolumn{1}{c|}{2215}  & \multicolumn{1}{c|}{370}  & 2385   \\ \hline
\multicolumn{1}{|c|}{\textbf{11}} & \multicolumn{1}{c|}{825}  & \multicolumn{1}{c|}{9328}  & \multicolumn{1}{c|}{6974}  & \multicolumn{1}{c|}{1419} & 9955   \\ \hline
\multicolumn{1}{|c|}{\textbf{12}} & \multicolumn{1}{c|}{1210} & \multicolumn{1}{c|}{21792} & \multicolumn{1}{c|}{20304} & \multicolumn{1}{c|}{2776} & 40444  \\ \hline
\multicolumn{1}{|c|}{\textbf{13}} & \multicolumn{1}{c|}{1716} & \multicolumn{1}{c|}{49361} & \multicolumn{1}{c|}{55796} & \multicolumn{1}{c|}{9768} & 144984 \\ \hline
\end{tabular}
\end{table}

\section{Acknowledgements}
We are truly grateful to Edgardo Rold\'an, Luis Montejano, and Luerbio Faria, who gave us useful suggestions in the some stages of this project. The research of the first, and fourth author were supported by Proyecto PAPIIT AG100721 UNAM and second author was funded by grants NSF DMS 2054419 and NSF DMS 2237324.  

\bibliographystyle{plain} 
\bibliography{bibliography}

\noindent \texttt{doliveros@im.unam.mx}\\
Instituto de Matem\'aticas, UNAM, Juriquilla, Quer\'etaro, M\'exico.\\[0.1cm]

\noindent \texttt{roldan@mis.mpg.de}\\
Max Planck Insitute for the Mathematics in the Sciences and ScaDS.AI Leipzig University, Leipzig, Germany.\\[0.1cm]

\noindent \texttt{psoberon@gc.cuny.edu}\\
The Graduate Center, City University of New York, NY, USA.
Baruch College, City University of New York, NY, USA.\\[0.1cm]

\noindent \texttt{antor@ucdavis.edu}\\
Department of Mathematics, UC. Davis, CA, USA.\\

\end{document}